\setlist[enumerate]{label=$(\mathrm{\arabic*})$, leftmargin=*}
\setlist[itemize]{leftmargin=*}
\newtheorem{thm}{Theorem}
\newaliascnt{theo}{thm}
\newtheorem{theo}[theo]{Theorem}
\newaliascnt{cor}{thm}
\newtheorem{cor}[cor]{Corollary}
\newaliascnt{prop}{thm}
\newtheorem{prop}[prop]{Proposition}
\newaliascnt{lem}{thm}
\newtheorem{lem}[lem]{Lemma}
\newaliascnt{conj}{thm}
\newaliascnt{que}{thm}
\newaliascnt{ass}{thm}
\newaliascnt{defnot}{thm}
\newaliascnt{rem}{thm}
\newaliascnt{exmp}{thm}
\newaliascnt{notn}{thm}
\newtheorem{conv}[thm]{Convention}
\newcommand{\Br}{{\rm Br}}
\newcommand{\alb}{{\rm alb}}
\newcommand{\CH}{{\rm CH}}
\newcommand{\Hom}{{\rm Hom}}
\newcommand{\Spec}{{\rm Spec \,}}
\newcommand{\Alb}{\rm Alb}
\newcommand{\Gal}{{\rm Gal}}
\newcommand{\NS}{{\operatorname{NS}}}
\newcommand{\et}{{\text{\'et}}}
\newcommand{\tor}{{\operatorname{\rm tor}}}
\newcommand{\ds}{{/\kern-3pt/}}
\let\c@equation\c@thm
\begin{document}
\mainmatter              
\title{A note on zero-cycles on bielliptic surfaces}
\titlerunning{Zero-cycles on bielliptic surfaces}  
%
\author{Evangelia Gazaki\inst{1}}
\authorrunning{Evangelia Gazaki} 
%
%
\institute{University of Virginia, Charlottesville VA 22904, USA,\\
\email{eg4va@virginia.edu},\\ website:
\texttt{https://sites.google.com/view/valiagazakihomepage/home}}

\maketitle              

\begin{abstract}
We study the Chow group of zero-cycles $\CH_0(S)$ of a bielliptic surface $S=(E_1\times E_2)/G$, where $E_1, E_2$ are elliptic curves and $G$ is a finite group acting on $E_1$ by translations and on $E_2$ by automorphisms such that $E_2/G\simeq\mathbb{P}^1$. We show that if $S$ is defined over an arbitrary field $k$ of characteristic not equal to $2,3$, then the kernel of the Albanese map $\alb_S:\CH_0(S)^{\deg=0}\rightarrow \Alb_S(k)$ is a torsion group of exponent $2^2\cdot|G|$ or $3^2\cdot|G|$, depending on the type of bielliptic surface. We also construct  explicit examples over $p$-adic fields that illustrate that this kernel  can have nontrivial elements obtained by push-forward from the abelian surface. 

\end{abstract}

\keywords{Zero-cycles, bielliptic surfaces, Brauer groups, elliptic curves}

\section{Introduction} 
For a smooth projective geometrically connected variety $X$ over a field $k$ the Chow group $\CH_0(X)$ of zero-cycles on $X$ admits a degree map 
\[\deg:\CH_0(X)\to\mathbb{Z},\] sending the class $[x]$ of a closed point to the degree, $[k(x):k]$, of the residue field. We will denote by $A_0(X)$ its kernel, which coincides with the subgroup of algebraically trivial cycles. Moreover, there is an Albanese map
\[\alb_X: A_0(X)\to\Alb_X(k),\] where $\Alb_X$ is the Albanese variety associated to $X$. We will denote its kernel by $T(X)$. 

In this short note we study the Chow group of zero-cycles $\CH_0(S)$ of a bielliptic surface $S$ over a field $k$. By a \textit{bielliptic surface} we will always mean a surface $S$ defined as the \'{e}tale quotient $S=(E_1\times E_2)/G$ of a product $X=E_1\times E_2$ of two elliptic curves, where $G$ is a finite abelian group acting on $E_1$ by translations by a $k$-rational nonzero torsion point, and on $E_2$ by automorpisms such that $E_2/G\simeq \mathbb{P}_k^1$.

 Such surfaces have Kodaira dimension $\kappa=0$, geometric genus $p_g=0$ and irregularity $q=1$. Given the first two properties, it follows by the classical work of Bloch, Kas and Lieberman (\cite{BKL76}) that when $k=\overline{k}$ is an algebraically closed field, then $T(S)=0$, which implies that if $S$ is defined over an arbitrary field $k$, then $T(S)$ is a torsion group.
 
When $k$ is a finite field, the unramified class field theory of Kato and Saito (\cite[Proposition 9]{KS83}) gives a description of the Albanese kernel $T(S)$ in terms of the torsion subgroup of the N\'{e}ron-Severi group of $S$, and hence it is generally nontrivial (see Remark \ref{finitefield}).  When $k$ is either an algebraic number field or a $p$-adic field, then it follows by work of Colliot-Th\'{e}l\`{e}ne and Raskind (\cite[Th\'{e}or\`{e}mes A, D, 5.1]{CTR91}) that  $T(S)$ is finite. In general though, to the author's knowledge not much else is known about its structure.  
 
The first theorem of this note gives more explicit information on what torsion may appear in $T(S)$ over an arbitrary base field $k$.  
 
 \begin{theo}\label{mainthmintro} Let $S=(E_1\times E_2)/G$ be a bielliptic surface over a field $k$ of characteristic not equal to $2$ or $3$. Then the Albanese kernel $T(S)$ is a torsion group of exponent $2^2\cdot|G|$ if $2$ divides $|G|$, and $3^2\cdot|G|$ otherwise. 
 \end{theo}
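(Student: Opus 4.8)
The plan is to compare $S$ with the abelian surface $X=E_1\times E_2$ through the quotient map $\pi\colon X\to S$, which is finite \'etale and Galois with group $G$. I would use the standard relation $\pi_*\pi^*=|G|\cdot\id$ on $\CH_0(S)$; also $\pi^*$ multiplies degrees by $|G|$, so it sends $A_0(S)$ into $A_0(X)$, and $\pi\circ g=\pi$ for every $g\in G$ forces $\pi^*\bigl(\CH_0(S)\bigr)\subseteq\CH_0(X)^G$. From $\pi_*\pi^*=|G|$ one gets at once that $\ker\bigl(\pi^*|_{T(S)}\bigr)$ is annihilated by $|G|$, so $T(S)$ is an extension of $\pi^*(T(S))$ by a group killed by $|G|$. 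Hence it suffices to show that $\pi^*(T(S))$ is killed by $2^2$ when $2\mid|G|$ and by $3^2$ otherwise.

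The next step is to locate $\pi^*(T(S))$ inside $A_0(X)$ by computing Albanese classes. Let $\bar G\subseteq\mathrm{Aut}(E_2,O)$ be the image of $G$ acting on the second factor; by the Bagnera--De Franchis classification $\bar G=\langle\sigma\rangle$ is cyclic of order $m\in\{2,3,4,6\}$, and since $\Char k\neq2,3$ the element $\sigma$ is a primitive $m$-th root of unity inside $\End(E_2)$, so $\Phi_m(\sigma)=0$ there. Fix $\alpha\in T(S)$. Because the deck transformations act on the first factor purely by translations and on the second through $\bar G$, the $G$-invariance of $\pi^*\alpha$ pushes the $E_2$-component of $\alb_X(\pi^*\alpha)\in(E_1\times E_2)(k)$ into $E_2^{\bar G}(k)=\ker(\sigma-1)(k)$; composing the first projection with the isogeny $E_1\to E_1/G=\Alb_S$ and using $\alb_S(\alpha)=0$ confines the $E_1$-component to $\ker(E_1\to E_1/G)=G$, and a short argument with a $G$-orbit of points — or, more cheaply, the fact that $\alpha$, hence $\pi^*\alpha$, vanishes after base change to $\bar k$ (where $T(S_{\bar k})=0$ by Bloch--Kas--Lieberman) — shows that this component is in fact $0$. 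Since $(\sigma-1)$ divides $\Phi_m(1)$ in $\End(E_2)$, the finite group $E_2^{\bar G}$ is killed by $\Phi_m(1)$; therefore $\Phi_m(1)\cdot\pi^*\alpha$ has vanishing Albanese class and lies in $T(E_1\times E_2)\cap\CH_0(X)^G=T(E_1\times E_2)^{\bar G}$.

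It remains to bound $T(E_1\times E_2)^{\bar G}$. Here I would invoke the description of the Albanese kernel of a product of curves: $T(E_1\times E_2)$ is canonically $\CH^2\bigl(\mathfrak h^1(E_1)\otimes\mathfrak h^1(E_2)\bigr)$, equivalently the Somekawa group $K(k;E_1,E_2)$; translations act trivially on it, and an endomorphism of $E_2$ acts via its functorial action on $\mathfrak h^1(E_2)$, so in particular $[-1]_{E_2}$ acts as $-\id$ and, more generally, $T(E_1\times E_2)$ is a $\mathbb{Z}[\zeta_m]$-module on which $\sigma$ acts as $\zeta_m$. As for any $\mathbb{Z}[\zeta_m]$-module, $\ker(\sigma-1)=T(E_1\times E_2)^{\bar G}$ is killed by $\Phi_m(1)$ (again because $\sigma-1$ divides $\Phi_m(1)$). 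Combining with the previous paragraph, $\pi^*(T(S))$ is killed by $\Phi_m(1)^2$, which equals $4$ for $m\in\{2,4\}$, $9$ for $m=3$, and $1$ for $m=6$. Since in the classification $m$ is even exactly when $|G|$ is even, $\Phi_m(1)^2$ divides $2^2$ when $2\mid|G|$ and equals $3^2$ otherwise; the reduction of the first paragraph then gives that $T(S)$ is killed by $2^2\cdot|G|$, respectively $3^2\cdot|G|$. (That $T(S)$ is torsion is already recorded in the introduction.)

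The step I expect to cause the most trouble is the bound on $T(E_1\times E_2)^{\bar G}$: it rests on having the identification $T(E_1\times E_2)\cong\CH^2(\mathfrak h^1(E_1)\otimes\mathfrak h^1(E_2))$ together with an explicit enough description of the $\End(E_2)$-action on it, at the level of $\mathbb{Z}$-modules and not merely up to isogeny — this is precisely where the structure theory of zero-cycles on products of curves must be brought in. A secondary difficulty is the Albanese computation in the middle: one must correctly match the Albanese varieties of $X$, of $S$, and of the quotient curves $E_1/G$ and $E_2/G=\mathbb{P}^1$, and keep track of the branch points of $E_2\to\mathbb{P}^1$, over which $f\colon S\to\mathbb{P}^1$ develops its multiple fibres. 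The hypothesis $\Char k\neq2,3$ enters both here, to know that $\mathrm{Aut}(E_2,O)$ is as in characteristic $0$ (so that $\sigma$ really generates $\mathbb{Z}[\zeta_m]$), and in controlling the orders of the torsion subgroups $E_2^{\bar G}$.
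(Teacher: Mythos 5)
Your proposal is correct in substance, but it runs in the opposite direction from the paper's proof, so the two are worth comparing. The paper pushes forward: it first reduces to Types 1 and 5 via the intermediate \'etale covers of \autoref{lem1} and \autoref{lem2} (each stage contributing one factor through $\pi_\star\pi^\star=\deg$), and then kills $\pi_\star(z_{P,Q})$ by an explicit manipulation --- the identity $\pi_\star[x]=\pi_\star[gx]$ together with bilinearity of $z_{P,Q}$ shows that both $\pi_\star(z_{P,Q})-\pi_\star(z_{g(P,Q)})$ and $\pi_\star(z_{P,Q})+\pi_\star(z_{g(P,Q)})$ (resp.\ the three-term cyclic analogues for Type 5) are $\varepsilon(S)$-torsion. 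You instead pull back, place $\pi^\star(T(S))$ inside $T(X)^G$, and bound the invariants by the divisibility $(\sigma-1)\mid\Phi_m(1)$ in $\End(E_2)$. Three remarks. First, the step you flag as risky is not: you do not need the Raskind--Spiess or motivic identification of $T(X)$ at integral level; the only inputs are that $T(X)$ is generated by the symbols $\rho_{L/k\star}(z_{P_L,Q_L})$, that these are bilinear, and that $(\id\times\phi)_\star z_{P,Q}=z_{P,\phi(Q)}$ --- precisely the facts recorded in \autoref{sec:bilinear}, and the same ones the paper uses. Second, your middle paragraph is more complicated than necessary: the parenthetical Bloch--Kas--Lieberman argument already shows $(\pi^\star\alpha)_{\overline{k}}=0$, hence $\pi^\star(T(S))\subseteq T(X)\cap\CH_0(X)^G$ outright, so the $E_2^{\bar G}$ detour and one of your two factors of $\Phi_m(1)$ can be dropped, giving the sharper exponent $\Phi_m(1)\cdot|G|$; in any case your stated bound $\Phi_m(1)^2\cdot|G|$ divides the paper's. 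Third, a small caution: for Types 2, 4 and 6 part of $G$ acts on $E_2$ by translation, so $\bar G$ should be the image of $G$ in $\mathrm{Aut}(E_2)$ modulo translations; this is harmless because translations (on either factor) act trivially both on $T(X)$ and on Albanese classes of degree-zero cycles, but it should be said. What each route buys: yours is uniform over all seven types (no reduction to Types 1 and 5) and yields strictly better constants in some cases (e.g.\ $\Phi_6(1)=1$ gives exponent $6$ for Type 7 instead of $2^2\cdot 6$), while the paper's stays entirely at the level of explicit cycle relations and avoids any discussion of the $\End(E_2)$-module structure.
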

 
 We note that the bielliptic surfaces have been classified into 7 different types (see \autoref{sec:bielliptic}), with the order $|G|$ ranging from $2$ to $9$. To prove \autoref{mainthmintro} we essentially consider each type separately, but using  an intermediate \'{e}tale cover $E_1\times E_2\rightarrow\tilde{S}\rightarrow S$ from another bielliptic surface $\tilde{S}$, we reduce the theorem to bielliptic surfaces of two specific types, corresponding to groups $G=\mathbb{Z}/2\mathbb{Z}$ and $G=\mathbb{Z}/3\mathbb{Z}$. 
 
Our second theorem gives some explicit examples of bielliptic surfaces over $p$-adic fields for which the group $T(S)$ is nontrivial.
 
 \begin{theo}\label{thm2intro} There exist bielliptic surfaces $S$ over $p$-adic fields such that the Albanese kernel $T(S)$ has nontrivial $2$-torsion. 
 \end{theo}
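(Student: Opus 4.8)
The plan is to construct an explicit bielliptic surface $S=(E_1\times E_2)/G$ over a suitable $p$-adic field $k$ with $G=\mathbb{Z}/2\mathbb{Z}$, and to produce a nonzero element of $T(S)$ by pushing forward a zero-cycle from the abelian surface $X=E_1\times E_2$ along the quotient map $\pi\colon X\to S$. The idea is that for well-chosen elliptic curves $E_1,E_2$ over $k$, the Albanese kernel $T(X)=T(E_1\times E_2)$ already contains nontrivial $p$-adic information — for instance by the work of Colliot-Th\'el\`ene--Raskind and explicit results on products of elliptic curves with good or multiplicative reduction, $T(X)$ can have nonzero $2$-torsion detected by a pairing into a Brauer group. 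One then checks that the composite $T(X)\xrightarrow{\pi_*}T(S)$ does not kill the chosen class.

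First I would set up the diagram $X\xrightarrow{\pi}S$ together with the transfer map $\pi^*\colon\CH_0(S)\to\CH_0(X)$, so that $\pi_*\pi^*=\cdot|G|=\cdot 2$ and $\pi^*\pi_*=\sum_{g\in G}g_*$ on $\CH_0(X)$. Restricting to Albanese kernels (using that $\pi$ is \'etale so $\pi_*,\pi^*$ are compatible with the degree and Albanese maps) gives the analogous identities on $T(X)$ and $T(S)$. The key point is to exhibit a class $z\in T(X)$ such that $\sum_{g\in G}g_* z = \pi^*\pi_* z$ is nonzero in $T(X)$; then automatically $\pi_* z\neq 0$ in $T(S)$. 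For $G=\mathbb{Z}/2\mathbb{Z}$, with the nontrivial element acting as $(a,b)\mapsto(a+t, -b)$ for a $2$-torsion point $t\in E_1(k)$, the relevant operator on $T(E_1\times E_2)$ is $1 + (\tau_t\times[-1])_*$, where $\tau_t$ is translation by $t$ and $[-1]$ is the inversion. On $T(E_1\times E_2)$, translations act trivially (translation is algebraically trivial as a correspondence, hence acts as the identity on $A_0$ modulo the Albanese, and in fact trivially on $T$), and $[-1]_*$ acts as $+1$ on $T(E_2)$-related part — so one is reduced to checking that multiplication by $2$, or rather the relevant idempotent component, is nonzero on a concrete $2$-torsion class in $T(E_1\times E_2)$.

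The source of a nonzero $2$-torsion class in $T(E_1\times E_2)$ over a $p$-adic field is the heart of the argument: here I would invoke the theory relating $T(E_1\times E_2)$ to the Brauer group via the Somekawa-type description of $K(k;E_1,E_2)$ (Milnor-type $K$-groups), or more concretely use that there is a pairing $T(E_1\times E_2)\times \Br(E_1\times E_2)\to \mathbb{Q}/\mathbb{Z}$ coming from local class field theory, and that for curves with suitable reduction types (e.g. both with good ordinary reduction, or one with split multiplicative reduction) one can find points $P\in E_1(k)$, $Q\in E_2(k)$ whose symbol $\{P,Q\}$ is nonzero and $2$-torsion. A clean choice: take $E_1,E_2$ with good reduction at $p$ (with $p$ odd) such that the reduction map on $2$-torsion is controlled, and use the Galois-cohomological / Brauer-group obstruction computed by Raskind--Spiess or Gazaki--Leal-Rodr\'iguez type explicit examples. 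Once $z=[P\times Q]-[P\times O]-[O\times Q]+[O\times O]\in T(X)$ is shown nonzero and $2$-torsion, and shown to survive $1+(\tau_t\times[-1])_*$, the push-forward $\pi_*z\in T(S)$ is nonzero, proving the theorem.

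The main obstacle is the last computation: verifying that the specific symbol $\{P,Q\}$ is genuinely nonzero in $T(E_1\times E_2)$ over the chosen $k$ \emph{and} that it is not annihilated by the averaging operator $1+(\tau_t\times[-1])_*$. For the non-annihilation, the cleanest route is to observe that $[-1]_*$ acts on the relevant Somekawa group $K(k;E_1,E_2)$ by sending $\{P,Q\}$ to $\{P,-Q\}=-\{P,Q\}$ on the $E_2$ slot but we also pick up a sign from functoriality, so the operator becomes multiplication by $2$ on a rank-one piece, which is injective on the prime-to-$2$ part but we want $2$-torsion — so one must instead arrange the surface with $G=\mathbb{Z}/3\mathbb{Z}$-type contribution absent and exploit that the relevant class sits in a summand where the operator is an isomorphism; alternatively, and more safely, choose $z$ of the form $\pi^*(w)$ directly so that $\pi_*z=2w$ and argue $2w\neq 0$ in $T(S)$ by a separate Brauer-group computation on $S$ itself, using $\Br(S)$ and the known structure of $\Pic$ and $\NS$ of bielliptic surfaces. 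I expect the Brauer-group bookkeeping on $S$ — keeping track of how $\Br(X)^G$ and the $G$-action interact, and producing an element pairing nontrivially with $\pi_*z$ — to be the technically delicate part, but feasible for a carefully chosen explicit example.
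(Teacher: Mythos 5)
Your overall framing (push a class forward from $T(E_1\times E_2)$ along the degree-two quotient $\pi:X\to S$ and detect it with a Brauer pairing) is the same as the paper's, but the mechanism you propose for the non-vanishing has a fatal flaw. You want to deduce $\pi_\star z\neq 0$ from $\pi^\star\pi_\star z=\sum_{g\in G}g_\star z\neq 0$, i.e.\ from the non-vanishing of $(1+\sigma_\star)z$, where $\sigma=(\tau_{P_0},[-1])$. But this operator is identically zero on $T(X)$: using the bilinearity of $z_{P,Q}$ one computes
\[\sigma_\star z_{P,Q}=z_{P+P_0,-Q}-z_{P_0,-Q}=z_{P,-Q}=-z_{P,Q},\]
and since $T(X)$ is generated by corestrictions of such symbols from finite extensions, $(1+\sigma_\star)T(X)=0$. (You sense a sign issue here but resolve it the wrong way: there is no extra sign from functoriality, so the operator is $1+(-1)=0$, not multiplication by $2$.) Hence your criterion can never detect anything. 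Your ``clean choice'' of curves with good reduction is also ruled out: for $p>2$ good reduction makes $T(X)$ $2$-divisible (Raskind--Spiess), and since $\pi_\star(T(X))$ has exponent dividing $4$ this forces $\pi_\star(T(X))=0$; the paper makes exactly this point, which is why it insists on split multiplicative reduction. Finally, the fallback ``take $z=\pi^\star(w)$ and show $2w\neq 0$ in $T(S)$ by a separate computation on $S$'' is circular as stated, since you have no independent handle on elements of $T(S)$.

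The paper's argument avoids $\pi^\star\pi_\star$ entirely: it detects $\pi_\star z\in T(S)$ directly via the projection formula $\langle\pi_\star z,\alpha\rangle_S=\langle z,\pi^\star\alpha\rangle_X$ for $\alpha\in\Br(S)$. The real work is to show that two conditions on a class in $\Br(X_{\overline k})[2]\simeq\Hom(E_1[2],E_2[2])$ are simultaneously satisfiable: being in the image of $\pi^\star$ from $\Br(S)$ (equivalently, being $G$-invariant, which forces the homomorphism to kill the translation point $P_0$), and pairing nontrivially with $T(X)/2$ (which, for non-isogenous Tate curves, means being nonzero on $\mu_2\subset E_1[2]$ with image nonzero modulo $\mu_2\subset E_2[2]$, by Yamazaki's computation of the Brauer--Manin pairing). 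Compatibility of the two requires choosing $P_0\notin\mu_2$, which is the one genuinely delicate choice in the construction. To salvage your write-up you would need to replace the averaging argument on $X$ by this projection-formula detection on $S$, impose split multiplicative reduction, and make the choice of $P_0$ explicit.
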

 To emphasize the difference in strategy with the case of finite fields, we note that the nonzero classes in $T(S)$ are obtained by push-forward from the abelian surface, which is chosen to have bad reduction. 
 The key for the construction is the Brauer-Manin pairing, 
 \[\CH_0(S)\times \Br(S)\to\mathbb{Q}/\mathbb{Z},\] which is a bilinear pairing between the Chow group $\CH_0(S)$ and the Brauer group $\Br(S)$ of $S$.



\subsection{Notation} All varieties will be defined over a field $k$ of characteristic not equal to $2$ or $3$. For a variety $X$ over $k$ and an extension $L/k$ we will denote by $X_L$ the base change to $\Spec(L)$. For a closed point $P\in X$ we will denote by $[P]$ its class in $\CH_0(X)$. Moreover, $\Br(X):=H^2_{\et}(X,\mathbb{G}_m)$ will be the cohomological Brauer group of $X$. For an abelian group $B$ and an integer $n\geq 1$ we will denote by $B[n]$ the $n$-torsion subgroup of $B$ and by $B/n$ the $n$-co-torsion.  

\subsection{Acknowledgment} The author's research was partially supported by the  NSF grant DMS-2302196. The author would like to thank Connor Cassady, Roy Joshua and Patrick Brosnan, the organizers of the conference `Arithmetic, $K$-Theory and Algebraic Cycles' for the opportunity to contribute this paper to the conference proceedings. Lastly, the author is grateful to the anonymous referee for useful comments. 

\section{Background} 
In this section we review some necessary background that will be used in the proof of the main results.
\subsection{Bielliptic Surfaces}\label{sec:bielliptic}  We start with some reminders about bielliptic surfaces. Over $\overline{k}$ these have been classified  by Bagnera-De Franchis (\cite{BDF10}) (see also \cite{Suw69}). In what follows we follow the exposition given in the more recent articles \cite[Section 2]{Nue25} and \cite[Section 2]{FTVB22}. To be able to use the classification over an arbitrary base field $k$ (of characteristic not equal to $2, 3$), we make the following convention. 

\begin{conv} In this article by a \textit{bielliptic surface} over $k$ we will always mean a surface $S$ defined as the \'{e}tale quotient $S=(E_1\times E_2)/G$ of a product $X=E_1\times E_2$ of two elliptic curves, where $G$ is a finite abelian group acting on $E_1$ by translations by a $k$-rational nonzero torsion point, and on $E_2$ by automorpisms defined over the base field such that $E_2/G\simeq \mathbb{P}_k^1$.
\end{conv} 

Under this convention, there are $7$ types of such \'{e}tale quotients $S=(E_1\times E_2)/G$.
The different types are presented in Table \ref{Table1}. For each type the Table gives the group $G$, the order of the canonical bundle $K_S$ and the torsion subgroup of $H^2(S_{\overline{k}},\mathbb{Z})$.

 \begingroup
		\renewcommand*{\arraystretch}{1.1}
		\begin{table}\centering
  \begin{tabular}{|c|c|c|c|}
    \hline
    Type & Group & ord$(K_S)$ & $H^2(S_{\overline{k}},\mathbb{Z})_{\tor} $ \\
    \hline
    $1$ & $\mathbb{Z}/2\mathbb{Z}$ & $2$ & $\mathbb{Z}/2\mathbb{Z}\times \mathbb{Z}/2\mathbb{Z}$\\
    \hline
    $2$ & $\mathbb{Z}/2\mathbb{Z}\times\mathbb{Z}/2\mathbb{Z}$  & $2$ & $\mathbb{Z}/2\mathbb{Z}$\\   
    \hline
    $3$ & $\mathbb{Z}/4\mathbb{Z}$ & $4$ & $\mathbb{Z}/2\mathbb{Z}$ \\    
    \hline
    $4$ & $\mathbb{Z}/4\mathbb{Z}\times\mathbb{Z}/2\mathbb{Z}$ & $4$ & $0$ \\  
    \hline
    $5$ & $\mathbb{Z}/3\mathbb{Z}$ & $3$ & $\mathbb{Z}/3\mathbb{Z}$ \\     
          \hline
          $6$ & $\mathbb{Z}/3\mathbb{Z}\times \mathbb{Z}/3\mathbb{Z}$ & $3$ & $0$ \\   
          \hline
          $7$ & $\mathbb{Z}/6\mathbb{Z}$ & $6$ & $0$ \\   
          \hline
  \end{tabular}
  \caption{Table 1}\label{Table1}
\end{table}
\endgroup

We next recall the definition of the fixed point free action only for bielliptic surfaces of Type 1, 3 and 5. In fact, in the proof of the main results we will only need the explicit description for surfaces of Type 1 and 5. In what follows for a fixed $k$-rational point $P_0\in E_1(k)$ we will denote by $\tau_{P_0}:E_1\to E_1$ the translation by $P_0$. 

\textbf{Type 1 Surfaces.} $G=\mathbb{Z}/2\mathbb{Z}=\langle\sigma\rangle$. Let $P_0\in E_1(k)$ be a nonzero $2$-torsion point. Then $\sigma$ acts on $X$ by 
\[\sigma(P,Q)=(\tau_{P_0}(P), -Q).\] 

\textbf{Type 3 Surfaces.} $G=\mathbb{Z}/4\mathbb{Z}=\langle\phi\rangle$. Assume that $k$ contains a primitive fourth root of unity $i$ and that $E_2$ has complex multiplication by $\mathbb{Z}[i]$. Let $i:E_2\to E_2$ be the extra endomorphism. Let $P_0\in E_1(k)$ be a point of order $4$. Then $\phi$ acts on $X$ by
\[\phi(P,Q)=(\tau_{P_0}(P),i(Q)).\]

\textbf{Type 5 Surfaces.} $G=\mathbb{Z}/3\mathbb{Z}=\langle\rho\rangle$. Suppose that the elliptic curve $E_2$ has complex multiplication by $\mathbb{Z}[\omega]$, where $\omega$ is a primitive third root of unity in $k$, and let $\omega: E_2\to E_2$ be the extra endomorphism. Let $P_0\in E_1(k)$ be a nonzero $3$-torsion point. Then $\rho$ acts on $X$ by
\[\rho(P,Q)=(\tau_{P_0}(P), \omega(Q)).\]

In all cases we will denote by $\pi:X\to S$ the \'{e}tale quotient map, which is of degree $n=|G|$. 
When $G$ is not cyclic, or it is cyclic but its order is not a prime number, then $S$ admits a cyclic cover $\pi':\tilde{S}\to S$ from another bielliptic surface $\tilde{S}$ whose group is cyclic of prime order. We recall these results from \cite{Nue25}. 

\begin{lem}\label{lem1} (cf.~\cite[Lemma 2.4]{Nue25}) Suppose that $\text{ord}(K_S)$ is composite with proper divisor $d$. Then there is a bielliptic surface $\tilde{S}$ sitting as an intermediate \'{e}tale cover  between $S$ and $X$, $\pi:X\xrightarrow{\tilde{\pi}}\tilde{S}\xrightarrow{\pi'}S$, such that $\text{ord}(\omega_{\tilde{S}})=\text{ord}(K_S)/d$. 
\end{lem}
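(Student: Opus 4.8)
The plan is to realise $\tilde S$ as a quotient $X/H$ of $X=E_1\times E_2$ by a well-chosen subgroup $H\le G$, the choice being dictated by the ``rotation part'' of the action on the second factor. First I would record the fact underlying everything: writing the $G$-action on $E_2$ in a uniformising coordinate as $g\cdot z=\chi(g)z+c_g$, we get a homomorphism $\chi\colon G\to\overline k^{\times}$ whose image is the group $\mu_m$ of $m$-th roots of unity with $m={\rm ord}(\chi)\in\{1,2,3,4,6\}$, and
\[{\rm ord}(K_S)={\rm ord}(\chi).\]
Indeed $\pi\colon X\to S$ is finite étale, so $\omega_X=\pi^{*}\omega_S$, while $\omega_X$ is trivialised by the translation-invariant $2$-form $\eta=\mathrm{pr}_1^{*}\omega_{E_1}\wedge\mathrm{pr}_2^{*}\omega_{E_2}$, on which $G$ acts through $\chi$ (translations act trivially on invariant $1$-forms). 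Hence $\omega_S^{\otimes j}\cong\mathcal O_S$ if and only if $G$ acts trivially on $\omega_X^{\otimes j}$, i.e. if and only if $\chi^{j}=1$, which is the displayed equality. (Alternatively one can read this off the explicit actions recalled in Section~\ref{sec:bielliptic}, the composite cases being Types $3,4$ with ${\rm ord}(K_S)=4$ and Type $7$ with ${\rm ord}(K_S)=6$.)

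With $n={\rm ord}(K_S)$ composite and $d\mid n$ a proper divisor, set $e=n/d$ and let $H=\chi^{-1}(\mu_e)\le G$, the preimage of the unique index-$d$ subgroup of the cyclic group $\mathrm{im}(\chi)$; then $\chi(H)=\mu_e$, so $\chi|_H$ has order $e=n/d$. I would then check that $\tilde S:=X/H$ has the required properties. Since $H$ is a subgroup of the group $G$, which acts freely on $X$, it acts freely too, so $\tilde\pi\colon X\to\tilde S$ is finite étale; and $G/H$ acts freely on $X/H$, so $\pi'\colon\tilde S\to S$ is finite étale, with $\pi$ factoring as $X\xrightarrow{\tilde\pi}\tilde S\xrightarrow{\pi'}S$. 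The group $H$ is abelian, acts on $E_1$ by translations by $k$-rational torsion points and on $E_2$ by automorphisms defined over $k$ with rotation part $\chi(H)=\mu_e\neq 1$; hence $E_2/H$ is a genus-zero curve over $k$ carrying the rational point image of $O_{E_2}$, so $E_2/H\simeq\mathbb P^1_k$. Finally the translations by which $H$ acts on $E_1$ are not all trivial, since otherwise any $h\in H$ with $\chi(h)\neq 1$ (which exists as $\chi|_H\neq 1$) would fix $(P,O_{E_2})$ for every $P\in E_1$, contradicting freeness. Thus $\tilde S$ is a bielliptic surface in the sense of our convention, and applying the displayed equality to $\tilde S$ gives ${\rm ord}(\omega_{\tilde S})={\rm ord}(\chi|_H)=n/d={\rm ord}(K_S)/d$.

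The only genuinely substantive point is the identity ${\rm ord}(K_S)={\rm ord}(\chi)$ of the first step — understanding how the canonical bundle of the quotient detects the $G$-action — but since $\pi$ is étale and $\omega_X$ is canonically trivialised by $\eta$, this is a short descent computation rather than a real obstacle; everything else is bookkeeping with cyclic groups together with a freeness check. The subtlety I would be careful to spell out is that $\tilde S$ really satisfies the Convention (freeness, the $\mathbb P^1$-quotient condition, and nontriviality of the translation part) and is not merely an abstract quotient variety, which is the content of the last step.
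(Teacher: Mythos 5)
Your construction is correct, but it is worth noting that the paper does not actually prove this statement: it is quoted verbatim from \cite[Lemma 2.4]{Nue25}, so there is no in-text argument to compare against. What you supply is a self-contained proof, and it is essentially the standard one (and, as far as I can tell, the same mechanism as in Nuer): the homomorphism $\chi\colon G\to\mathrm{Aut}(E_2,O)\subset \overline{k}^\times$ recording the linear part of the action on $E_2$, the identity $\mathrm{ord}(K_S)=\mathrm{ord}(\chi)$ obtained by descending the invariant $2$-form along the \'etale quotient, and the choice $H=\chi^{-1}(\mu_{n/d})$ with $\tilde S=X/H$. All the verifications you list (freeness of $H$ and of $G/H$, genus $0$ of $E_2/H$ because $\chi|_H\neq 1$, existence of a $k$-point on the quotient, nontriviality of the translation part) are the right ones, and they are consistent with how the paper then uses the lemma (Type $3\to$ Type $1$ with $d=2$, Type $7\to$ Type $1$ with $d=3$). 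One small imprecision: in the freeness check you say that an $h\in H$ with $\chi(h)\neq 1$ and trivial translation on $E_1$ would fix $(P,O_{E_2})$; this presumes $h$ acts on $E_2$ by a group automorphism fixing $O_{E_2}$, which need not hold when $G$ is non-cyclic (e.g.\ a Type 4 input, where the action on $E_2$ can involve a translation component). The fix is immediate --- any automorphism $\tau_c\circ\alpha$ of $E_2$ with $\alpha\neq\mathrm{id}$ has \emph{some} fixed point, since $\alpha-\mathrm{id}$ is a surjective isogeny --- so $h$ would still fix a point of $X$, contradicting freeness; but the fixed point is not $O_{E_2}$ in general. With that wording adjusted, the argument is complete.
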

Applying this to surfaces of Type $3$ and $7$ and for $d=2, 3$ respectively,  we get \'{e}tale covers $\tilde{S}\xrightarrow{\pi'}S$ of degrees $2$ and $3$ respectively such that in both cases $\tilde{S}$ is a bielliptic surface of Type 1. 

\begin{lem}\label{lem2} (cf.~\cite[Lemma 2.5]{Nue25}) Suppose that $S=(E_1\times E_2)/G$, with $G\simeq\mathbb{Z}/m\mathbb{Z}\times\mathbb{Z}/\lambda_S\mathbb{Z}$, where $\lambda_S=|G|/\text{ord}(K_S)$. Suppose that $\lambda_S>1$. Then there is a bielliptic surface $\tilde{S}$ sitting as an intermediate \'{e}tale cover  between $S$ and $X$, $\pi:X\xrightarrow{\tilde{\pi}}\tilde{S}\xrightarrow{\pi'}S$, such that $\lambda_{\tilde{S}}=1$ and $\text{ord}(\omega_{\tilde{S}})=\text{ord}(K_S)$. 
\end{lem}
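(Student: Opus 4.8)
The plan is to realize $\tilde S$ as an intermediate quotient $X/H$ for a suitably chosen subgroup $H\le G$; the crux is a small group-theoretic splitting, after which the construction is routine. Let $\rho\colon G\to\operatorname{Aut}(E_2)$ be the action on $E_2$, with image $\bar G$ and kernel $K$. Recall from the classification that $\rho$ takes values in the group $\operatorname{Aut}(E_2,O)$ of automorphisms fixing the origin, which (since $\Char(k)\neq 2,3$) is cyclic of order $2$, $4$ or $6$; hence $\bar G$ is cyclic. Recall also that $G$ acts trivially on $\omega_{E_1}$ and acts on $\omega_{E_2}$ through the faithful embedding $\operatorname{Aut}(E_2,O)\hookrightarrow k^{\times}$, so $\text{ord}(K_S)$, being the order of the character of $G$ on $H^0(X,\omega_X)=H^0(E_1,\omega_{E_1})\otimes H^0(E_2,\omega_{E_2})$, equals $|\bar G|$. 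By hypothesis $\text{ord}(K_S)=|G|/\lambda_S=m$, so $|\bar G|=m$ and $|K|=\lambda_S$ (and $K$ acts on $X=E_1\times E_2$ only by translating the first coordinate). The goal is a subgroup $H\le G$ complementary to $K$.

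To produce $H$: by inspection of Table~\ref{Table1}, $\lambda_S$ divides $m$, so $G\cong\mathbb Z/m\mathbb Z\times\mathbb Z/\lambda_S\mathbb Z$ has exponent $m$. Lift a generator of the cyclic group $\bar G$ to an element $g\in G$; its order is a multiple of $|\bar G|=m$ and a divisor of the exponent $m$, hence is exactly $m$, so $H:=\langle g\rangle$ has order $m$ and $\rho|_H\colon H\to\bar G$ is an isomorphism. In particular $H\cap K=\ker(\rho|_H)=\{1\}$ and $H\cdot K=G$, i.e.\ $G=H\oplus K$. Put $\tilde S:=X/H$. As $H$ is a subgroup of the free $G$-action, $\tilde\pi\colon X\to\tilde S$ is \'etale; as $G$ is abelian, $G/H$ acts on $\tilde S$ with $\tilde S/(G/H)=X/G=S$, and this residual action is free: if $gH$ fixes $Hx$ then $gx=hx$ for some $h\in H$, so $h^{-1}g$ fixes $x$ and is therefore trivial, i.e.\ $g\in H$. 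Hence $\pi'\colon\tilde S\to S$ is \'etale of degree $|G/H|=\lambda_S$, and $\pi=\pi'\circ\tilde\pi$.

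It remains to verify that $\tilde S=(E_1\times E_2)/H$ is a bielliptic surface in the sense of the Convention, with the asserted invariants. The group $H$ acts on $E_1$ through the restriction of the translation action of $G$, and the point of $E_1(k)$ by which $g$ translates is nonzero: otherwise $g$ would act on $X$ by $(P,Q)\mapsto(P,\rho(g)Q)$ and would fix every point of the form $(P,O)$, contradicting freeness of the $G$-action. Since $\rho|_H\colon H\xrightarrow{\ \sim\ }\bar G$, we have $E_2/H=E_2/\bar G=E_2/G\simeq\mathbb P^1_k$, so $\tilde S$ is indeed a bielliptic surface. Finally, the character by which $H$ acts on $\omega_X$ factors as $H\xrightarrow{\ \sim\ }\bar G\hookrightarrow k^{\times}$ and thus has order $|\bar G|=m$; since $X\to\tilde S$ is \'etale Galois with group $H$, this is exactly $\text{ord}(\omega_{\tilde S})$, whence $\text{ord}(\omega_{\tilde S})=m=\text{ord}(K_S)$ and $\lambda_{\tilde S}=|H|/\text{ord}(\omega_{\tilde S})=1$. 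The one genuinely delicate point is the splitting $G=H\oplus K$: a subgroup of a finite abelian group need not admit a complement, and here it does precisely because $\bar G$ is cyclic of order equal to the exponent of $G$, i.e.\ because $\operatorname{Aut}(E_2,O)$ is cyclic and $\lambda_S\mid\text{ord}(K_S)$ throughout the classification; everything else is bookkeeping.
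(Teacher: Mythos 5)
The paper itself gives no proof of this lemma: it is quoted directly from \cite[Lemma 2.5]{Nue25}. Nuer's argument, like yours, realizes $\tilde S$ as $X/H$ for a cyclic subgroup $H\le G$ of order $\text{ord}(K_S)$ complementary to a subgroup of order $\lambda_S$, so your overall skeleton is the right one; but your proof rests on a description of the $G$-action that is false in exactly the cases the lemma covers.

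You assert that the action $\rho\colon G\to\operatorname{Aut}(E_2)$ lands in the origin-fixing automorphisms, and deduce that its kernel $K$ has order $\lambda_S$ and ``acts on $X$ only by translating the first coordinate.'' In the Bagnera--De Franchis classification, for the three types with $\lambda_S>1$ (Types $2$, $4$, $6$ of Table \ref{Table1}) the second generator of $G$ acts on $E_2$ by translation by a \emph{nonzero} torsion point: e.g.\ for Type $2$ one has $\sigma_1(P,Q)=(P+a_1,-Q)$ and $\sigma_2(P,Q)=(P+a_2,Q+c)$ with $c\in E_2[2]$, $c\neq O_2$. Hence the literal kernel of the action on $E_2$ is trivial and its image is non-cyclic, so the counts $|\bar G|=m$ and $|K|=\lambda_S$ fail as stated. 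What is true, and what you actually need, is that the \emph{linear part} of the action --- equivalently the character of $G$ on $H^0(E_2,\omega_{E_2})$ --- is cyclic of order $\text{ord}(K_S)$ with kernel of order $\lambda_S$; but that kernel translates both coordinates of $X$, not only the first.

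With $\rho$ redefined as this linear part, your splitting argument ($\lambda_S\mid m$, so $G$ has exponent $m$ and any lift $g$ of a generator of $\bar G$ generates a complement $H$ of $K$) is correct, as is the \'etaleness and freeness bookkeeping for $X\to X/H\to S$. Two later steps still need repair, because the chosen lift $g$ may act on $E_2$ by $Q\mapsto\alpha(Q)+c$ with $c\neq O_2$: (i) the identification $E_2/H=E_2/\bar G$ is not automatic; one must either choose the translation-free lift (which exists: it is the standard first generator) or note that an affine automorphism with nontrivial linear part $\alpha$ is conjugate to $\alpha$ by the translation by the solution of $(1-\alpha)(Q_0)=c$, so that $E_2/H\simeq\mathbb{P}^1_k$ in any case; (ii) your freeness argument ``$g$ would fix every point $(P,O)$'' should read ``$g$ would fix every point $(P,Q_0)$ with $(1-\alpha)(Q_0)=c$,'' such $Q_0$ existing because $1-\alpha$ is an isogeny. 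These are all repairable, and the computation of $\text{ord}(\omega_{\tilde S})$ via the character on $H^0(X,\omega_X)$ then goes through unchanged (translations act trivially on the invariant differential); but as written the proof starts from a false premise about the classification.
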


Applying this to surfaces of Type $2, 4$ and $6$, we get \'{e}tale covers $\tilde{S}\xrightarrow{\pi'}S$ such that the bielliptic surface $
\tilde{S}$ is of Type $1, 3$ and $5$ respectively and $\pi'$ is of degree $2, 2$ and $3$ respectively. 

The following Corollary summarizes the results we will need in the proof of \autoref{mainthmintro}. 

\begin{cor}\label{cor1} Let $S=(E_1\times E_2)/G$ be a bielliptic surface over $k$ such that $|G|$ is a composite integer. Then the quotient map $X\xrightarrow{\pi}S$ factors through a finite \'{e}tale cover $\tilde{S}\xrightarrow{\pi'} S$, where $\tilde{S}$ is a Type $1$ surface if $2||G|$, and $\tilde{S}$ is a type $5$ surface otherwise. 
\end{cor}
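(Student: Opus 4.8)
The plan is to assemble, type by type, the two observations recorded immediately after \autoref{lem1} and \autoref{lem2}. There are five types of bielliptic surfaces with $|G|$ composite, namely Types $2,3,4,6,7$ of Table \ref{Table1}; among these $2\mid|G|$ exactly for Types $2,3,4,7$ (with $|G|=4,4,8,6$), while Type $6$ has $|G|=9$, which is odd. So it suffices to exhibit, for each of Types $2,3,4,7$, a finite \'etale cover of $S$ by a Type $1$ surface through which $\pi$ factors, and for Type $6$ a finite \'etale cover of $S$ by a Type $5$ surface through which $\pi$ factors.

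First I would handle the four cases needing only one intermediate cover. Applying \autoref{lem1} with $d=2$ to a Type $3$ surface, and with $d=3$ to a Type $7$ surface, produces in each case a finite \'etale factorization $X\xrightarrow{\tilde\pi}\tilde S\xrightarrow{\pi'}S$ with $\tilde S$ of Type $1$. Applying \autoref{lem2} to a Type $2$ surface produces such a factorization with $\tilde S$ of Type $1$, and applying it to a Type $6$ surface produces one with $\tilde S$ of Type $5$. This settles Types $2,3,6,7$.

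The remaining case is Type $4$, with $|G|=8$, which requires two steps. Here \autoref{lem2} yields a finite \'etale factorization $X\to\tilde S\xrightarrow{\pi'}S$ with $\tilde S$ of Type $3$. Since $\tilde S$ is now itself a Type $3$ bielliptic surface (again with $X=E_1\times E_2$ as its abelian cover), I would apply \autoref{lem1} to $\tilde S$ with $d=2$ to obtain a further finite \'etale factorization $X\xrightarrow{\tilde\pi}\tilde{\tilde S}\to\tilde S$ with $\tilde{\tilde S}$ of Type $1$. As a composition of finite \'etale morphisms is finite \'etale, the composite $\tilde{\tilde S}\to\tilde S\xrightarrow{\pi'}S$ is a finite \'etale cover of $S$ by a Type $1$ surface, and $\pi$ factors through it because $X\to\tilde{\tilde S}$.

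I do not anticipate any real obstacle: all the content lies in \autoref{lem1}, \autoref{lem2} and the classification summarized in Table \ref{Table1}. The only things to be careful about are (i) feeding each lemma the right proper divisor $d$ (resp.\ the right product decomposition of $G$) so that the output has the stated order of canonical bundle, (ii) remembering that Type $4$ needs the two \'etale covers chained into one, and (iii) checking that the surface produced by a lemma is unambiguously of the claimed type --- this holds because over $k$ the $7$-type classification is exhaustive (by our standing Convention) and among the $7$ types a bielliptic surface is pinned down by the pair $\bigl(\text{ord}(K_S),\,|G|/\text{ord}(K_S)\bigr)$, whose values $(2,1),(2,2),(4,1),(4,2),(3,1),(3,3),(6,1)$ are pairwise distinct.
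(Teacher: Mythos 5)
Your proposal is correct and follows the paper's route exactly: it assembles the corollary from \autoref{lem1} (Types $3$, $7$) and \autoref{lem2} (Types $2$, $4$, $6$) together with the classification in Table \ref{Table1}. The only point where you go beyond what the paper writes out is the Type $4$ case, where you explicitly chain \autoref{lem2} with a second application of \autoref{lem1} to descend from the intermediate Type $3$ surface to a Type $1$ surface --- a step the paper leaves implicit but which is indeed required for the corollary as stated.
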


 
\subsection{Zero-cycles on a Product of Elliptic Curves}\label{sec:bilinear} We also need some facts about the Albanese kernel $T(X)$, where $X=E_1\times E_2$. We will denote by $O_i\in E_i(k)$ the zero element of $E_i$. The subgroup $A_0(X)$ of zero-cycles of degree $0$ is generated by elements of the form $[P,Q]-\deg(P,Q)[O_1,O_2]$. Moreover, every rational point $(P,Q)\in X(k)$ induces a special zero-cycle in the kernel $T(X)$ of the Albanese map,
\[z_{P,Q}:=[P,Q]-[P,O_2]-[O_1,Q]+[O_1, O_2]=\pi_1^\star([P]-[O_1])\cdot\pi_2^\star([Q]-[O_2]).\]
 Here we denoted by $\pi_i: E_1\times E_2\to E_i$ the two projections and by $\cdot$ the intersection product. 
 Next, suppose that $L/k$ is a finite extension and we have a point $(P, Q)\in X(L)$. Then this induces closed points $P_L\in (E_1)_L, Q_L\in (E_2)_L$ and we get a zero-cycle
 $z_{P,Q}=\rho_{L/k\star}(z_{P_L, Q_L})$, where $\rho_{L/k}:X_L\to X$ is the projection. It is classically known that this construction gives a surjective group homomorphism
 \[\varepsilon: \bigoplus_{L/k}(E_1(L)\otimes_\mathbb{Z} E_2(L))\twoheadrightarrow T(X).\] In particular, the zero-cycle $z_{P,Q}$ is \textbf{bilinear} on $P,Q$. 
 
\begin{remark}  We note that Raskind and Spiess (\cite{RS00}) computed the kernel of the map $\varepsilon$. They showed an isomorphism $T(X)\simeq K(k;E_1, E_2)$ with the Somekawa $K$-group attached to $E_1, E_2$. In what follows we won't need this fact. We will only use the bilinearity of $z_{P,Q}$.  
\end{remark}

\section{Main results} 
\subsection{Structure of the Albanese kernel over non-algebraically closed fields} 
We are now ready to prove \autoref{mainthmintro}, which we restate here. 
\begin{theo}\label{mainthm} Let $X=E_1\times E_2$ be a product of elliptic curves over $k$ and $S=(E_1\times E_2)/G$ be a bielliptic surface. Then the Albanese kernel $T(S)$ is a torsion group of exponent $2^2\cdot|G|$ if $2$ divides $|G|$, and $3^2\cdot|G|$ otherwise. 
\end{theo}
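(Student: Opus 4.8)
The plan is to reduce, via \autoref{cor1}, to bielliptic surfaces of Type $1$ (with $G=\Z/2\Z$) and Type $5$ (with $G=\Z/3\Z$), then handle these two cases by an explicit analysis of the pushforward $\pi_\star$ and pullback $\pi^\star$ along $\pi:X=E_1\times E_2\to S$. The starting point is the fact (from Bloch–Kas–Lieberman) that $T(S_{\kk})=0$, so $T(S)$ is automatically torsion; the content is to bound the exponent. I would first establish the \emph{composite case reduction}: given a cover $\pi':\tilde S\to S$ of degree $e\in\{2,3\}$ with $\tilde S$ of Type $1$ or $5$, the composition $\pi'_\star\circ\pi'^\star$ is multiplication by $e$ on $\CH_0(S)$, hence on $T(S)$; so if $e\cdot T(\tilde S)$ has exponent dividing $N$ then $T(S)$ has exponent dividing $e\cdot N$. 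A quick check against Table~\ref{Table1} shows that for every composite type, $e\cdot(\text{exponent bound for }\tilde S)$ gives exactly the claimed $2^2|G|$ or $3^2|G|$, so it suffices to prove the bound $4$ for Type $1$ and $9$ for Type $5$.

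\textbf{The prime cases.}
For a Type $1$ or Type $5$ surface, I would combine the two transfer maps the other way: $\pi_\star\circ\pi^\star = \cdot |G|$ on $\CH_0(S)$, and I want to control $T(S)$ using $T(X)$, which by the Raskind–Spiess-type description is generated by the bilinear cycles $z_{P,Q}$. The key is to understand $\pi_\star z_{P,Q}\in\CH_0(S)$ and show that $|G|^{?}\cdot(\text{this class})$ vanishes, while also showing $\pi^\star$ of a degree-zero cycle on $S$ lands in (a controlled multiple of) the image of $z_{P,Q}$'s. Concretely, for Type $1$, $\sigma(P,Q)=(\tau_{P_0}P,-Q)$ with $2P_0=O_1$; the relation $\pi^\star\pi_\star[x]=[x]+[\sigma x]$ lets me compute, for a point $(P,Q)$, that $\pi^\star\pi_\star z_{P,Q}$ equals $z_{P,Q}+z_{\tau_{P_0}P,-Q}$ up to terms that are pushforwards of $A_0$ of a single elliptic curve (which die in $T$). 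Using bilinearity, $z_{\tau_{P_0}P,-Q} = -z_{P,Q} - z_{P_0,Q} + (\text{corrections})$; iterating and using $2P_0=O_1$ (so $z_{P_0,\bullet}$ is $2$-torsion) should pin down that $4 z_{P,Q}$ pushes forward to $0$ in $T(S)$, and combined with $\pi_\star\pi^\star=\cdot 2$ and surjectivity of $\pi^\star$ onto $T(S)$ (after multiplying by something controlled), yield exponent $4$. The Type $5$ case is parallel with $3P_0=O_1$ and $\rho(P,Q)=(\tau_{P_0}P,\omega Q)$, $1+\omega+\omega^2=0$ on $E_2$, giving the factor $9=3^2$ after the same bookkeeping with a cubic automorphism.

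\textbf{The main obstacle.}
The delicate point is controlling the error terms: $\pi^\star\pi_\star$ applied to a zero-cycle on $X$ is a sum over $G$-translates, and the individual $z_{P,Q}$ are \emph{not} stable under the diagonal translation part of the action, so one must carefully track the difference $z_{gP,gQ}-z_{P,Q}$, which by bilinearity involves cross-terms $z_{P_0,Q}$ and terms supported on a "horizontal" or "vertical" curve $E_1\times\{O_2\}$ or $\{O_1\}\times E_2$ whose classes on $S$ must be shown to be torsion of the right order (they are pushforwards of $A_0(E_i)$, which map to $A_0$ of a curve of genus $\le 1$ inside $S$, hence are killed by the degree of the relevant cover or are already trivial because $T$ only sees the "transcendental" part). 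Making precise that $\CH_0(S)^{\deg 0}$ modulo $T(S)$ is exactly $\Alb_S(k)$, and that $\pi^\star,\pi_\star$ act compatibly with the Albanese maps so that the whole computation can be done "modulo the Albanese variety," is where I expect to spend the most care. Once that framework is set up, the exponent bounds for the two prime cases are a finite, explicit computation, and \autoref{cor1} propagates them to all seven types with the exact constants in the statement.
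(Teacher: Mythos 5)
Your proposal follows the paper's proof essentially step for step: reduce via \autoref{cor1} to Types $1$ and $5$, use $\pi'_\star\pi'^\star=\deg\pi'$ to bound the cokernel of the pushforward, and then bound $\pi_\star(z_{P,Q})$ using the identity $\pi_\star\sigma_\star=\pi_\star$ (equivalently, your $\pi^\star\pi_\star=1+\sigma_\star$) together with bilinearity of $z_{P,Q}$. Three bookkeeping corrections, none of which breaks the strategy. First, the intermediate cover $\pi':\tilde S\to S$ has degree $|G|/2$ or $|G|/3$, which equals $4$ for Type $4$, so ``$e\in\{2,3\}$'' is not quite right, though the arithmetic $e\cdot N=2^2|G|$ or $3^2|G|$ still comes out correctly. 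Second, the bound your method actually delivers for a Type $1$ (resp.\ Type $5$) surface is $2^3=8$ (resp.\ $3^3=27$), not $4$ (resp.\ $9$): you show $\pi_\star(T(X))$ is $4$- (resp.\ $9$-) torsion, but $\pi_\star\pi^\star=2$ (resp.\ $3$) only gives $2\,T(S)\subseteq\pi_\star(T(X))$, contributing one more factor of the degree. Since $2^2\cdot|G|=8$ when $|G|=2$, the exponent $8$ is exactly what the theorem asserts for Type $1$, and it is $8$ and $27$ (not $4$ and $9$) that propagate through the composite types to give exactly $2^2|G|$ and $3^2|G|$; your ``quick check against Table 1'' would have caught this. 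Third, the horizontal/vertical correction terms such as $[P_0,Q]-[O_1,Q]$ and $z_{P_0,-Q}$ do not ``die in $T$'' and are not disposed of for Albanese reasons; they are honestly $2$-torsion (resp.\ $3$-torsion) because $2([P_0]-[O_1])=0$ in $\CH_0(E_1)$, and this nonvanishing torsion is precisely the source of the square in $\varepsilon^2$. With these repairs your argument coincides with the paper's.
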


\begin{proof} 
The first step will be to reduce to the case when $S$ is either a Type 1 or a Type 5 surface. Suppose that $|G|$ is composite. Then it follows by  \autoref{cor1} that the quotient map $X\xrightarrow{\pi}S$ factors through a finite \'{e}tale map $\tilde{S}\xrightarrow{\pi'}S$, where $\tilde{S}$ is a surface of Type $1$, if $2||G|$, and of Type $5$ otherwise. Set $\varepsilon(\tilde{S})=2$, if $\tilde{S}$ is of Type $1$, and $\varepsilon(\tilde{S})=3$, if $\tilde{S}$ is of Type $5$. 
Note that $\pi'$ is a finite morphism of degree $m=|G|/\varepsilon(\tilde{S})$. It induces a push-forward $\pi'_\star:\CH_0(\tilde{S})\to\CH_0(S)$, which restricts to a homomorphism $A_0(\tilde{S})\xrightarrow{\pi'_\star} A_0(S)$. Moreover, it follows by the universal property of the Albanese variety that $\pi'$ induces a homorphism of abelian varieties $\Alb_{\tilde{S}}\to\Alb_S$, and hence the push-forward map $\pi'_\star$ restricts further to a homomorphism 
$\pi'_\star:T(\tilde{S})\to T(S)$. It follows easily by the identity $\pi'_\star\pi'^\star=m$ that \[m\cdot\frac{T(S)}{\pi'_\star(T(\tilde{S}))}=0\] (see for example the proof of \cite[Theorem 5.2]{GR25}).  Thus, it suffices to show that $T(\tilde{S})$ is torsion of exponent $\varepsilon(\tilde{S})^3$. 

From now on we assume that $S=\tilde{S}$ is a bielliptic surface of Type $1$ or $5$. 
Similarly to the previous step, the projection map $\pi:X\to S$ induces a push-forward
$\pi_\star:T(X)\to T(S)$, such that $T(S)/\pi_\star(T(X))$ is $\varepsilon(S)$-torsion. This reduces the problem to showing that every zero-cycle $z\in\pi_\star(T(X))$ is $\varepsilon(S)^2$-torsion. Recall from \autoref{sec:bilinear} that the group $T(X)$ is generated by zero-cycles of the form $\rho_{L/k\star}(z_{P_L,Q_L})$, where $L/k$ ranges over all finite extensions of $k$ and $(P, Q)\in X(L)$. We have a commutative diagram
\[\begin{tikzcd} X_L\arrow[r, "\pi_L"] \ar[d, "\rho_{L/k}"] & S_L  \arrow[d, "\rho_{L/k}"] \\
	     X\arrow[r, "\pi"] & S ,
\end{tikzcd} 
\]  which yields $\rho_{L/k\star}\pi_{L\star}=\pi_\star\rho_{L/k\star}$. 
Thus, it is enough to show that $\pi_{L\star}(z_{P_L,Q_L})$ is $\varepsilon(S)^2$-torsion for $(P_L,Q_L)\in X_L(L)$ where $L/k$ ranges over all finite extensions of $k$. As the computation is uniform on $L/k$, for simplicity we assume that $(P,Q)\in X(k)$. 
The method is similar for both types, but we treat each case separately for clarity. 

\textbf{Type 1:} Suppose $G=\mathbb{Z}/2\mathbb{Z}=\langle\sigma\rangle$ and the action is given on rational points by $\sigma(P,Q)=(P+P_0, -Q)$, where $P_0\in E_1[2](k)$, $P_0\neq O_1$.  
We have:
\begin{eqnarray}
\pi_\star(z_{P,Q})= && \pi_\star([P,Q]-[P,O_2]-[O_1,Q]+[O_1,O_2])\nonumber\\=
&&\pi_\star([P+P_0,-Q]-[P+P_0, O_2]-[O_1, Q]+[O_1, O_2]).\\
\pi_\star(z_{P+P_0, -Q})=&&\pi_\star([P+P_0,-Q]-[P+P_0, O_2]-[O_1, -Q]+[O_1, O_2]).
\end{eqnarray}
By subtracting these two relations we get 
\[\pi_\star(z_{P,Q}-z_{P+P_0,-Q})=\pi_\star([O_1, -Q]-[O_1,Q])=\pi_\star([P_0, Q]-[O_1, Q]).\] 
Since $P_0$ is a $2$-torsion point of $E_1$, on $\CH_0(E_1)$ we have a rational equivalence $2([P_0]-[O_1])=0$, which implies that the zero-cycle $t_1:=[P_0, Q]-[O_1, Q]\in T(X)$ is $2$-torsion. We conclude that 
\begin{equation}\label{eq1}
\pi_\star(z_{P,Q})-\pi_\star(z_{P+P_0,-Q})=\pi_\star(t_1)\text{ is }2-\text{torsion}. 
\end{equation}
On the other hand, bilinearity gives,
\[
\pi_\star(z_{P+P_0,-Q})=-\pi_\star(z_{P,Q})+\pi_\star(z_{P_0, -Q}).\]
Since $2P_0=O_1$, bilinearity again yields that the zero-cycle $t_2:=z_{P_0, -Q}$ is also $2$-torsion. We conclude that 

\begin{equation}\label{eq2}
\pi_\star(z_{P+P_0,-Q})+\pi_\star(z_{P,Q})=\pi_\star(t_2)\text{ is }2-\text{torsion}.
\end{equation}
Combining equations \eqref{eq1}, \eqref{eq2} we conclude that $2\pi_\star(z_{P,Q})$ is $2$-torsion, and hence $\pi_\star(z_{P,Q})$ is $4$-torsion. 

\textbf{Type 5:} Suppose that $G=\mathbb{Z}/3\mathbb{Z}=\langle\rho\rangle$ and the action is given by $\rho(P,Q)=(P+P_0, \omega(Q))$, where $P_0\in E_1[3](k)$, $P_0\neq O_1$, and $\omega$ is a primitive third root of unity in $k$ inducing an extra endomorphism $\omega:E_2\to E_2$. The proof in this case is very analogous. 
First, recall that $\omega^2+\omega+1=0$. Using the bilinearity of $z_{P,Q}$, this yields,
\begin{eqnarray*}
z_{P,Q}+z_{P+P_0,\omega(Q)}+z_{P+2P_0, \omega^2(Q)}=s_1,
\end{eqnarray*} where $s_1:=z_{P_0,\omega(Q)}+z_{2P_0,\omega^2(Q)}$ is a $3$-torsion element. Thus, we conclude that 
\begin{equation}
\pi_\star(z_{P,Q}+z_{P+P_0,\omega(Q)}+z_{P+2P_0, \omega^2(Q)})=\pi_\star(s_1)\text{ is }3-\text{torsion}. 
\end{equation}

Moreover, using the group action we get: 
\begin{eqnarray*}
\pi_\star(z_{P,Q})= && \pi_\star([P,Q]-[P,O_2]-[O_1,Q]+[O_1,O_2])\\=
&&\pi_\star([P+P_0,\omega(Q)]-[P+P_0, O_2]-[O_1, Q]+[O_1, O_2]).\\
\pi_\star(z_{P+P_0, \omega(Q)})=&&\pi_\star([P+P_0,\omega(Q)]-[P+P_0, O_2]-[O_1, \omega(Q)]+[O_1, O_2]).
\end{eqnarray*}
Subtracting the two relations yields,
\[\pi_\star(z_{P,Q}-z_{P+P_0,\omega(Q)})=\pi_\star([O_1,\omega(Q)]-[O_1,Q])=\pi_\star([O_1,\omega(Q)]-[P_0,\omega(Q)]).\] Since $P_0$ is a $3$-torsion point of $E_1$, on $\CH_0(E_1)$ we have a rational equivalence $3([P_1]-[O_1])=0$, and hence the zero-cycle $s_2:=[O_1,\omega(Q)]-[P_0,\omega(Q)]\in \CH_0(X)$ is $3$-torsion. Thus, the last relation yields that 
\[\pi_\star(z_{P,Q}-z_{P+P_0,\omega(Q)})=\pi_\star(s_2)\text{ is }3-\text{torsion}.\] Replacing $P$ with $P+P_0$ and $Q$ with $\omega(Q)$ gives 
\[\pi_\star(z_{P+P_0,\omega(Q)}-z_{P+2P_0,\omega^2(Q)})\text{ is }3-\text{torsion}.\] Combining these with equation \ref{eq2} yields that $3\pi_\star(z_{P,Q})$ is $3$-torsion, and hence $\pi_\star(z_{P,Q})$ is $9$-torsion as required. 

$\hfill\Box$
\end{proof}

\subsection{Results over $p$-adic fields}
In this section we prove \autoref{thm2intro}. Throughout this section we will be working over a finite extension $k$ of $\mathbb{Q}_p$ with ring of integers $\mathcal{O}_k$ and residue field $\mathbb{F}$. We keep the same notation from the previous sections and also assume that $p\geq 5$. 
%
%
%
%
%
%
%
%

We will  construct an example of a product $X=E_1\times E_2$ of elliptic curves with split multiplicative reduction over a $p$-adic field $k$ and a bielliptic surface $S$ of Type 1 such that the subgroup $\pi_\star(T(X))$ of $T(S)$ is nontrivial. To do this, we will use the Brauer-Manin pairing. We start with some preliminary results on the transcendental Brauer group of such a Type 1 surface.

\begin{lem}\label{Brcomputation} Let $X=E_1\times E_2$ be a product of elliptic curves defined over $\mathbb{Q}$.  Suppose that $E_1, E_2$ are non-isogenous over $\overline{\mathbb{Q}}$. Let $S=X/G$, $G=\mathbb{Z}/2\mathbb{Z}$, be a bielliptic surface of Type 1. Then the pullback map 
\[\pi^\star: \Br(S_{\overline{\mathbb{Q}}})\to \Br(X_{\overline{\mathbb{Q}}})\] is injective and its image is precisely the $G$-invariant subgroup of $\Br(X_{\overline{\mathbb{Q}}})[2]$. The same result holds true with $\overline{\mathbb{Q}}$ replaced by $\overline{\mathbb{Q}}_p$ for every prime $p$. 
\end{lem}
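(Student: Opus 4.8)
The plan is to exploit the standard relationship between the Brauer group of a quotient by a free finite group action and the Galois/group cohomology of the cover, via the Hochschild–Serre spectral sequence for the étale cover $\pi\colon X\to S$. Since $\pi$ is finite étale of degree $2$ with group $G=\mathbb{Z}/2\mathbb{Z}$, and we are working over an algebraically closed field ($\overline{\mathbb{Q}}$ or $\overline{\mathbb{Q}}_p$), the pullback $\pi^\star$ on Brauer groups should be analyzed through $H^p(G,H^q_{\et}(X_{\overline{\mathbb{Q}}},\mathbb{G}_m))$. The key geometric inputs are: $H^0(X,\mathbb{G}_m)=\overline{\mathbb{Q}}^\times$ is divisible, $\Pic(X_{\overline{\mathbb{Q}}})$ is finitely generated modulo a divisible part, and—crucially—the hypothesis that $E_1,E_2$ are non-isogenous, which forces $\NS(X_{\overline{\mathbb{Q}}})\cong \mathbb{Z}^2$ (generated by the two fiber classes) with trivial $G$-action, and $\Br(X_{\overline{\mathbb{Q}}})\cong (\mathbb{Q}/\mathbb{Z})^{?}$ controlled by $H^2$ of the abelian surface.

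**Key steps, in order.**
First I would write down the Hochschild–Serre spectral sequence $E_2^{p,q}=H^p(G, H^q_{\et}(X_{\overline{\mathbb{Q}}},\mathbb{G}_m))\Rightarrow H^{p+q}_{\et}(S_{\overline{\mathbb{Q}}},\mathbb{G}_m)$. Since $\overline{\mathbb{Q}}^\times$ is divisible and $G$ is finite, $H^p(G,\overline{\mathbb{Q}}^\times)=0$ for $p>0$, killing the bottom row past $q=0$. Next I would handle $q=1$: $H^p(G,\Pic(X_{\overline{\mathbb{Q}}}))$. Writing $\Pic(X_{\overline{\mathbb{Q}}})$ as an extension of $\NS(X_{\overline{\mathbb{Q}}})$ by $\Pic^0(X_{\overline{\mathbb{Q}}})=\widehat{X}(\overline{\mathbb{Q}})$, the divisible part $\Pic^0$ contributes nothing to higher $H^p(G,-)$ in the relevant range modulo 2-torsion subtleties, and $H^p(G,\NS)=H^p(\mathbb{Z}/2,\mathbb{Z}^2)$ with trivial action, which is $\mathbb{Z}/2$-torsion in odd degrees and $0$ in positive even degrees—but this only affects the $H^1$ and $H^3$ of $S$, not directly $\Br(S)=H^2$. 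The heart is the analysis of $E_2^{0,2}=H^0(G,\Br(X_{\overline{\mathbb{Q}}}))=\Br(X_{\overline{\mathbb{Q}}})^G$ together with the differentials $d_2\colon E_2^{0,2}\to E_2^{2,1}$ and $d_2\colon E_2^{0,1}\to E_2^{2,0}$, and the edge map. I would argue that $\pi^\star\colon \Br(S_{\overline{\mathbb{Q}}})\to \Br(X_{\overline{\mathbb{Q}}})$ lands in $\Br(X_{\overline{\mathbb{Q}}})^G$ with kernel a subquotient coming from $E_2^{1,1}=H^1(G,\Pic X_{\overline{\mathbb{Q}}})$ and $E_\infty^{2,0}$, and then use divisibility/torsion bookkeeping to show these vanish or are killed. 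For injectivity one uses that $\Br(X_{\overline{\mathbb{Q}}})$ is divisible (being $(\mathbb{Q}/\mathbb{Z})^4$ up to the transcendental part) so $\Br(X_{\overline{\mathbb{Q}}})^G$ contains no contribution from $H^2(G,\overline{\mathbb{Q}}^\times)$, and that the potential kernel from $H^1(G,\Pic)$ is killed because $\Pic^0$ is divisible and $\NS$ contributes $H^1(\mathbb{Z}/2,\mathbb{Z}^2)=0$. For the identification of the image with $\Br(X_{\overline{\mathbb{Q}}})[2]^G$: by a transfer/corestriction argument $2\cdot\mathrm{coker}(\pi^\star)=0$ combined with $\Br(S_{\overline{\mathbb{Q}}})$ being $2$-power torsion in its transcendental part (consistent with Table 1: $H^2(S_{\overline{k}},\mathbb{Z})_{\tor}=(\mathbb{Z}/2)^2$ for Type 1), one gets that the image is contained in the $2$-torsion; combined with surjectivity onto $\Br(X)[2]^G$ via the cokernel computation from the spectral sequence, equality follows.

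**Main obstacle.**
The hard part will be the precise control of the $q=1$ row, namely showing $H^1(G,\Pic(X_{\overline{\mathbb{Q}}}))$ and the $d_2$ differentials do not introduce an unexpected kernel or obstruct surjectivity onto $\Br(X)[2]^G$. This requires knowing the $G$-module structure of $\Pic(X_{\overline{\mathbb{Q}}})$—in particular that $\NS(X_{\overline{\mathbb{Q}}})$ is the full rank-$2$ lattice with trivial $G$-action (here the non-isogeny hypothesis is essential, since an isogeny would enlarge $\NS$ and possibly give nontrivial $G$-action), and that $\Pic^0(X_{\overline{\mathbb{Q}}})\cong E_1(\overline{\mathbb{Q}})\times E_2(\overline{\mathbb{Q}})$ as a $G$-module is divisible enough that its group cohomology with $\mathbb{Z}/2$-coefficients is manageable. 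The passage from $\overline{\mathbb{Q}}$ to $\overline{\mathbb{Q}}_p$ is routine: all the cohomology groups in play are computed from torsion sheaves and the geometry is insensitive to the algebraically closed base field of characteristic $0$, so one invokes smooth base change / the fact that $\Br$ of an abelian variety over an algebraically closed field of characteristic $0$ depends only on the $\ell$-adic cohomology, which is unchanged. I would also double-check the $p=2,3$ exclusion is not silently needed here—Type 1 surfaces require only a $2$-torsion point on $E_1$, so the argument should be characteristic-free away from $2$, but the paper's standing hypothesis $p\geq 5$ and $\Char k\neq 2,3$ covers us.
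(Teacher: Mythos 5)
Your overall strategy (Hochschild--Serre for the \'{e}tale $G$-cover $X\to S$) is the standard framework for this kind of statement, but the ``divisibility/torsion bookkeeping'' you rely on contains two concrete errors that would derail the argument. First, $H^p(G,\overline{\mathbb{Q}}^\times)$ does \emph{not} vanish for all $p>0$: for $G=\mathbb{Z}/2\mathbb{Z}$ acting trivially on a divisible group $A$, divisibility kills $H^{2j}(G,A)=A/2A$ for $j>0$ but $H^{2j+1}(G,A)=A[2]$, so $H^1(G,\overline{\mathbb{Q}}^\times)=H^3(G,\overline{\mathbb{Q}}^\times)=\mu_2\cong\mathbb{Z}/2\mathbb{Z}$. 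Second, and more seriously, $H^1(G,\Pic(X_{\overline{\mathbb{Q}}}))$ does not vanish: since translations act trivially on $\Pic^0$ and $[-1]^\star$ acts by $-1$, the involution $\sigma$ acts on $\Pic^0(X_{\overline{\mathbb{Q}}})\cong E_1^\vee(\overline{\mathbb{Q}})\times E_2^\vee(\overline{\mathbb{Q}})$ by $(\id,-1)$, whence $H^1(G,\Pic^0(X_{\overline{\mathbb{Q}}}))\cong E_1^\vee(\overline{\mathbb{Q}})[2]\cong(\mathbb{Z}/2\mathbb{Z})^2$ \emph{despite} divisibility; after accounting for the connecting map from $\NS(X_{\overline{\mathbb{Q}}})^G$ one is still left with $H^1(G,\Pic(X_{\overline{\mathbb{Q}}}))\cong\mathbb{Z}/2\mathbb{Z}$. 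Consequently the potential kernel $E_\infty^{1,1}$ of $\pi^\star$ is not killed by any soft argument: injectivity hinges on showing that the differential $d_2\colon E_2^{1,1}\to E_2^{3,0}\cong\mathbb{Z}/2\mathbb{Z}$ is injective. That is exactly the delicate content of \cite[Theorem B]{FTVB22} (which also shows the kernel \emph{can} be nonzero when $E_1,E_2$ are isogenous, so no argument ignoring the isogeny hypothesis at this step can succeed). The identification of the image has the same problem: one must control $d_2\colon E_2^{0,2}\to E_2^{2,1}=H^2(G,\Pic(X_{\overline{\mathbb{Q}}}))$, which is likewise nonzero, and your transfer argument only yields $2\cdot\Br(X_{\overline{\mathbb{Q}}})^G\subseteq\im(\pi^\star)$, which does not pin the image down. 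You flagged this row as ``the main obstacle,'' but the proposal does not actually overcome it.

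For comparison, the paper's proof is short and citation-based rather than a spectral-sequence computation: injectivity of $\pi^\star$ over $\mathbb{C}$ is quoted from \cite[Theorem B]{FTVB22}; the passage between $\mathbb{C}$, $\overline{\mathbb{Q}}$ and $\overline{\mathbb{Q}}_p$ is \cite[Proposition 5.2.3]{CTS21} (the one step you correctly identify as routine); the order of the image is read off from $\Br(S_{\mathbb{C}})\simeq H^2(S_{\mathbb{C}},\mathbb{Z})_{\tor}\simeq(\mathbb{Z}/2\mathbb{Z})^2$ in Table 1; and the target is identified via \cite[Proposition 3.3]{SZ12} with $\Hom(E_1[2],E_2[2])$ (using non-isogeny to kill the denominator), after which the image is located by comparing orders with the $G$-invariant subgroup. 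If you want a self-contained proof along your lines, you must compute the $q=0$ and $q=1$ rows with the correct $G$-module structures and evaluate the $d_2$ and $d_3$ differentials explicitly; otherwise the injectivity should simply be cited.
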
 
\begin{proof} Considering the base change to $\mathbb{C}$, it follows by \cite[Theorem B]{FTVB22} that the pullback 
$\pi^\star: \Br(S_{\mathbb{C}})\to \Br(X_{\mathbb{C}})$ is injective, because $E_1, E_2$ are non-isogenous. It then follows by \cite[Proposition 5.2.3]{CTS21} that we have isomorphisms
\[\Br(S_{\mathbb{C}})\simeq \Br(S_{\overline{\mathbb{Q}}})\simeq \Br(S_{\overline{\mathbb{Q}}_p})\] for every prime $p$ and similar isomorphisms are true for $X$. 

Moreover, since $S$ is of Type 1, we have isomorphisms \[\Br(S_{\mathbb{C}})\simeq H^2(S_{\mathbb{C}},\mathbb{Z})_{\tor}\simeq (\mathbb{Z}/2\mathbb{Z})^2\] (see Table \ref{Table1}). Putting all this together, we see that the image of the pullback map $\Br(S_{\overline{\mathbb{Q}}})\xrightarrow{\pi^\star} \Br(X_{\overline{\mathbb{Q}}})$ is a subgroup of $\Br(X_{\overline{\mathbb{Q}}})[2]$ of order $4$. It remains to show that it is precisely the $G$-invariant subgroup of the latter. \cite[Proposition 3.3]{SZ12} gives an isomorphism
\[\Br(X_{\overline{\mathbb{Q}}})[2]\simeq\frac{\Hom(E_1[2], E_2[2])}{(\Hom(E_1, E_2))/2}\simeq \Hom(E_1[2], E_2[2]).\] The last equality follows because $E_1, E_2$ are non-isogenous. Let $G=\langle\sigma\rangle$ act on $X_{\overline{\mathbb{Q}}}$ via $\sigma(P,Q)=(P+P_0, -Q)$. Since the diagram 
\[\begin{tikzcd}
X \ar{r}{\sigma} \ar{dr}{\pi} & X \ar{d}{\pi} \\
& S
\end{tikzcd}\] is commutative, it follows that $\pi^\star=\sigma^\star\pi^\star$, which shows that $\pi^\star(\Br(S_{\overline{\mathbb{Q}}})$ lies in the $G$-invariant subgroup of $\Br(X_{\overline{\mathbb{Q}}})$. Equality will follow if we show that the latter is isomorphic to $(\mathbb{Z}/2\mathbb{Z})^2$. But this is clear, since every homomorphism $f:E_1[2]\to E_2[2]$ sending $P_0$ to the zero element is fixed by the $G$-action. 

$\hfill\Box$

\end{proof}

We next recall some facts about the \textit{Brauer-Manin pairing}. Let $Y$ be a smooth projective geometrically connected surface over $k$. 
 The Brauer-Manin pairing (see \cite[p.~53]{CT95}), 
\[\langle,\rangle:\CH_0(Y)\times\Br(Y)\to \mathbb{Q}/\mathbb{Z},\] is defined as follows. For the class $[P]$ of a closed point and a Brauer class $\alpha\in \Br(Y)$, 
$\langle [P],\alpha\rangle:=\rm{Cor}_{k(P)/k}(\iota_P^\star(\alpha))$, where $\iota_P:\rm{Spec}(k(P))\hookrightarrow X$ is the closed immersion, and $\rm{Cor}_{k(P)/k}:\Br(k(P))\to\Br(k)$ is the Corestriction map of Galois cohomology. The pairing induces a homomorphism
\[\varepsilon_2: \CH_0(Y)/2\to \Hom(\Br(Y)[2],\mathbb{Q}/\mathbb{Z}).\]

The following proposition is the analog of \cite[Proposition 3.9]{GL24}. 
\begin{prop}\label{BMsquare} Let $X=E_1\times E_2$ be a product of elliptic curves over $k$. Suppose that $E_1, E_2$ are not isogenous over $\overline{k}$. Let $S=X/G$, $G=\mathbb{Z}/2\mathbb{Z}$, be a bielliptic surface of Type 1. Then the projection map $\pi:X\to S$ induces a commutative square 
\[\begin{tikzcd}
T(X)\ar{r}{\varepsilon_2^X} \ar{d}{\pi_\star} & \Hom(\Br(X)/\Br_1(X))\ar{d}{\widehat{\pi^\star}}\\ 
\pi_\star(T(S))\ar{r}{\varepsilon_2^S} & \Hom(\Br(S)/\Br_1(S)).
\end{tikzcd}\] Here $\Br_1(X))=\ker(\Br(X)\to \Br(X_{\overline{k}}))$ is the \textit{algebraic Brauer group} of $X$. 
\end{prop}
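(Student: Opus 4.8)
The plan is to construct the commutative square by first showing that the two horizontal maps $\varepsilon_2^X$ and $\varepsilon_2^S$ are well-defined, and then checking compatibility of the Brauer-Manin pairings under push-forward and pull-back. For the horizontal maps, I would observe that for a smooth projective surface $Y$ over $k$, the Brauer-Manin pairing kills the image of $A_0(Y)$ paired with $\Br_1(Y)$; this is because for an algebraic Brauer class the pairing with a degree-zero cycle factors through the Albanese map (one uses that $\Br_1(Y)/\Br(k)$ injects into $H^1(k,\Pic(Y_{\bar k}))$, and the pairing of $A_0(Y)$ with such a class is computed via the Albanese and the Weil pairing / cup product, see the discussion in \cite{CT95}). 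Hence the pairing descends to $T(Y)\times (\Br(Y)/\Br_1(Y))\to\Q/\Z$. Since by \autoref{Brcomputation} the group $\Br(X_{\bar k})$ (hence $\Br(X)/\Br_1(X)$) is $2$-torsion for our $X$, and likewise $\Br(S)/\Br_1(S)$ is $2$-torsion, the pairing restricted to $T(Y)$ gives exactly a homomorphism $\varepsilon_2^Y:T(Y)\to\Hom(\Br(Y)/\Br_1(Y),\Q/\Z)$, for $Y\in\{X,S\}$. (Here I should first note $T(S)=A_0(S)$ since $\Alb_S$ is $1$-dimensional and $A_0(S)\to\Alb_S(k)$ — actually one must be a little careful; but in any case $\pi_\star(T(S))$ is the relevant target and the pairing is defined on all of $\CH_0(S)$.)

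Next I would establish the compatibility. The key input is the projection formula for the Brauer-Manin pairing along the finite flat morphism $\pi:X\to S$: for $z\in\CH_0(X)$ and $\beta\in\Br(S)$ one has $\langle \pi_\star z,\beta\rangle_S=\langle z,\pi^\star\beta\rangle_X$. This follows from functoriality of corestriction in Galois cohomology together with the fact that $\pi$ is finite: if $z=[P]$ for a closed point $P\in X$ with image $\pi(P)=\bar P\in S$, then $k(\bar P)\subseteq k(P)$ is a finite extension, $\pi_\star[P]=[k(P):k(\bar P)]\cdot[\bar P]$ at the level of cycles mapping to the same point (or rather $\pi_\star[P]=[\bar P]$ with appropriate multiplicity depending on ramification — here $\pi$ is étale so it is just the degree of residue field extension), and $\iota_P^\star\pi^\star\beta=\iota_{\bar P}^\star\beta$ restricted further; then $\Cor_{k(P)/k}=\Cor_{k(\bar P)/k}\circ\Cor_{k(P)/k(\bar P)}$ and $\Cor_{k(P)/k(\bar P)}\iota_P^\star\pi^\star\beta=[k(P):k(\bar P)]\cdot\iota_{\bar P}^\star\beta$. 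This gives exactly $\langle z,\pi^\star\beta\rangle_X=\langle\pi_\star z,\beta\rangle_S$. Since $\widehat{\pi^\star}$ is by definition the map $\Hom(\Br(X)/\Br_1(X),\Q/\Z)\to\Hom(\Br(S)/\Br_1(S),\Q/\Z)$ dual to the pullback $\pi^\star:\Br(S)/\Br_1(S)\to\Br(X)/\Br_1(X)$ (which is well-defined because $\pi^\star$ carries $\Br_1(S)$ into $\Br_1(X)$ by naturality of the Leray filtration), the projection formula says precisely that for $z\in T(X)$, $\widehat{\pi^\star}(\varepsilon_2^X(z))$ and $\varepsilon_2^S(\pi_\star z)$ agree as functionals on $\Br(S)/\Br_1(S)$. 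That is exactly commutativity of the square.

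The one point requiring a little care, and the main potential obstacle, is verifying that $\varepsilon_2^X$ and $\varepsilon_2^S$ land in $\Hom(\Br(-)/\Br_1(-),\Q/\Z)$ rather than merely in $\Hom(\Br(-),\Q/\Z)$ — i.e.\ that $T(Y)$ pairs trivially with $\Br_1(Y)$ — and more subtly that the pairing on $T(Y)$ has image in the $2$-torsion dual, which is what makes the notation $\varepsilon_2$ legitimate. The first is the standard fact recalled above (algebraically trivial cycles pair trivially with algebraic Brauer classes over any field, via the Albanese); for the second I would use \autoref{Brcomputation}, which shows $\Br(X)/\Br_1(X)\hookrightarrow\Br(X_{\bar k})$ is $2$-torsion, and the analogous statement for $S$ (whose transcendental Brauer group is a subgroup of $H^2(S_{\bar k},\Z)_{\tor}=(\Z/2)^2$ by Table~\ref{Table1}), so the target of the pairing is automatically $2$-torsion and there is nothing further to check. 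I would model the write-up closely on the proof of \cite[Proposition 3.9]{GL24}, which the statement explicitly flags as the analog, importing the same formal ingredients (descent of the pairing to $T(Y)$, the projection formula for corestriction, naturality of $\Br_1$) and substituting \autoref{Brcomputation} where that reference uses its own Brauer-group computation.
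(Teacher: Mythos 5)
Your overall strategy --- descend the Brauer--Manin pairing to $T(Y)\times\Br(Y)/\Br_1(Y)$ and then apply the projection formula $\langle \pi_\star z,\beta\rangle_S=\langle z,\pi^\star\beta\rangle_X$ for the finite \'etale map $\pi$ --- is the same as the paper's, which defers both ingredients to \cite[Lemma 3.7, Proposition 3.9]{GL24}; your projection-formula argument is fine. The gap is in the descent step, which is precisely the one point the paper's proof singles out as needing justification. You assert that for \emph{any} smooth projective surface $Y$ over \emph{any} field the pairing of $A_0(Y)$ with $\Br_1(Y)$ factors through the Albanese map, so that $T(Y)\perp\Br_1(Y)$. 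That is not a general fact. Via Hochschild--Serre, $\Br_1(Y)/\Br_0(Y)$ injects into $H^1(k,\Pic(Y_{\overline{k}}))$, and the Albanese/duality argument only controls the classes coming from $H^1(k,\Pic^0_{Y_{\overline{k}}})$; the contribution of $H^1(k,\NS(Y_{\overline{k}}))$ need not pair trivially with the Albanese kernel. (Conic bundle surfaces over $p$-adic fields give counterexamples: there $\Br(Y)=\Br_1(Y)$, $T(Y)=A_0(Y)$ since $\Alb_Y=0$, and nonzero elements of $A_0(Y)$ are detected by purely algebraic Brauer classes.) The paper closes this gap using the hypothesis that $E_1,E_2$ are non-isogenous over $\overline{k}$: this forces the Galois action on $\NS(X_{\overline{k}})\simeq\mathbb{Z}^2$ to be trivial, which kills the problematic piece $H^1(k,\NS(X_{\overline{k}}))$, and then $T(X)\perp\Br_1(X)$ follows from \cite[Prop.~2.16]{Gaz24}. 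Your write-up never invokes the non-isogeny hypothesis at this step, which is the tell-tale sign that something is missing. Relatedly, note that the bottom-left corner of the square is the subgroup $\pi_\star(T(X))$ rather than all of $T(S)$ exactly so that well-definedness of $\varepsilon_2^S$ there can be deduced from the statement for $X$ via the projection formula (using $\pi^\star\Br_1(S)\subseteq\Br_1(X)$), instead of being proved directly for $S$, whose geometric N\'eron--Severi group has torsion and a possibly nontrivial Galois action.

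A secondary error: you claim, citing \autoref{Brcomputation}, that $\Br(X_{\overline{k}})$, and hence $\Br(X)/\Br_1(X)$, is $2$-torsion. For an abelian surface $\Br(X_{\overline{k}})\simeq(\mathbb{Q}/\mathbb{Z})^{b_2-\rho}$ is divisible; \autoref{Brcomputation} only identifies the image of $\pi^\star\colon\Br(S_{\overline{\mathbb{Q}}})\to\Br(X_{\overline{\mathbb{Q}}})$ with a subgroup of order $4$ inside the $2$-torsion. This does not affect the commutativity of the square, but your justification of the subscript in $\varepsilon_2$ does not stand as written; the relevant restriction to $2$-torsion happens on the $S$-side.
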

The proof is exactly analogous to the proof of \cite[Proposition 3.9]{GL24}, using first \cite[Lemma 3.7]{GL24}. We just note that the image of the Albanese kernel $T(X)$ under the map $\varepsilon_2^X$ lies indeed in  $\Hom(\Br(X)/\Br_1(X))$. This is because $E_1, E_2$ being non-isogenous over $\overline{k}$ implies that the $\Gal(\overline{k}/k)$-action on the N\'{e}ron-Severi group $\NS(X_{\overline{k}})$ is trivial, and then the result follows from \cite[Prop 2.16]{Gaz24}. 
\medskip

%
We are now ready to prove \autoref{thm2intro}, which we restate here.

\begin{theo} There exists a product $X=E_1\times E_2$ over a $p$-adic field $k$ and a bielliptic surface $S=X/G$ of Type 1 such that the group $T(S)$ has nontrivial $2$-torsion.  
\end{theo}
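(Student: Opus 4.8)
The plan is to construct everything explicitly over a $p$-adic field $k$, starting from elliptic curves with split multiplicative reduction whose arithmetic is controlled by Tate parametrizations. First I would pick $E_1, E_2$ with split multiplicative reduction over $k$, non-isogenous over $\overline{k}$, and such that $E_1$ has a $k$-rational nonzero $2$-torsion point $P_0$ so that a Type 1 surface $S = X/G$ with $G = \langle\sigma\rangle$, $\sigma(P,Q) = (P+P_0, -Q)$, is actually defined over $k$. Using the Tate parametrization $E_i(\overline{k}) \simeq \overline{k}^\times/q_i^{\Z}$, the group $T(X)$, which by \autoref{sec:bilinear} is a quotient of $\bigoplus_{L/k} E_1(L)\otimes E_2(L)$, contains classes $z_{P,Q}$ that can be detected by cup products in Galois cohomology; the point of bad reduction is that the local Tate pairing gives these classes a nonzero image in $\Br(k) \simeq \Q/\Z$ against suitable transcendental Brauer classes.

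Next I would use \autoref{Brcomputation} together with \autoref{BMsquare}: the transcendental Brauer group $\Br(S_{\overline{k}})$ is $(\Z/2)^2$, realized as the $G$-invariant part of $\Br(X_{\overline{k}})[2] \simeq \Hom(E_1[2], E_2[2])$, and the commutative square in \autoref{BMsquare} reduces showing $\pi_\star(z_{P,Q}) \neq 0$ in $T(S)$ to showing that the image of $z_{P,Q}$ under $\varepsilon_2^X$ does not die after applying $\widehat{\pi^\star}$, i.e.\ that $z_{P,Q}$ pairs nontrivially with some transcendental class in $\Br(S)$ that lifts to a $G$-invariant class in $\Br(X)[2]$. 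Concretely I would choose $P \in E_1(L)$, $Q \in E_2(L)$ for a small extension $L/k$ (or even $L = k$) so that, under the identification of $\Br(X)[2]$-pairings with cup products of the Kummer classes of $P$ and $Q$ against the chosen homomorphism $E_1[2] \to E_2[2]$, the resulting element of $H^2(k,\mu_2) = \Br(k)[2]$ is the nonzero class — this is exactly where split multiplicative reduction is used, since for curves with good reduction the relevant local pairing would vanish on the classes one can write down, whereas the Tate curve supplies a ramified class $\{u, q\}$ with $u$ a unit and $q$ the Tate parameter that pairs nontrivially.

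The main obstacle will be the last step: verifying that the chosen Brauer class on $X$ really is $G$-invariant (hence descends to $S$ by \autoref{Brcomputation}) \emph{and} simultaneously pairs nontrivially with the explicit cycle $z_{P,Q}$. The constraint from \autoref{Brcomputation} is that the homomorphism $f : E_1[2] \to E_2[2]$ must kill $P_0$ to be $G$-invariant, so I must arrange that $P$ reduces modulo the Kummer map to a class not in the subgroup generated by $P_0$, while $Q$ contributes the ramified part coming from the Tate parameter $q_2$ of $E_2$. I would make this work by taking $P$ to be (the image of) a non-$2$-torsion point whose Kummer class is a unit pairing nontrivially with $q_2$, and checking the cup-product computation $\langle z_{P,Q}, \pi^\star\alpha\rangle = \big(\text{Kummer}(P) \cup f_\star\text{Kummer}(Q)\big) \in \Br(k)[2]$ is nonzero for the admissible $f$. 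Once a single such triple $(E_1, E_2, (P,Q))$ is exhibited with the pairing landing in the nonzero element of $\tfrac12\Z/\Z$, \autoref{BMsquare} forces $\pi_\star(z_{P,Q})$ to be a nonzero $2$-torsion class in $T(S)$, completing the proof.
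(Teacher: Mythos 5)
Your overall architecture matches the paper's: split multiplicative reduction and the Tate uniformization, the Brauer--Manin pairing restricted to transcendental $2$-torsion classes, \autoref{Brcomputation} to identify which classes of $\Br(X)[2]\simeq\Hom(E_1[2],E_2[2])$ descend to $S$ (those killing the translation point), and the compatibility $\langle\pi_\star(z),\alpha\rangle_S=\langle z,\pi^\star(\alpha)\rangle_X$ from \autoref{BMsquare}. However, there is a genuine gap at the crux of the argument: you never identify the condition on the translation point $P_0$ itself that makes the two constraints compatible. The Tate uniformization gives a distinguished subgroup $\mu_2\subset E_1[2]$, and the computation of the right kernel of the pairing $T(X)/2\times\Hom(E_1[2],E_2[2])\to\mathbb{Z}/2$ (Yamazaki, or \cite[Theorem 1.6]{Gaz19}) shows that $f$ pairs nontrivially with \emph{some} zero-cycle if and only if the composite $\mu_2\hookrightarrow E_1[2]\xrightarrow{f}E_2[2]\twoheadrightarrow\mathbb{Z}/2$ is nonzero. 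If you happen to take $P_0\in\mu_2$ (i.e.\ $P_0=-1$ in $\mathbb{G}_m/q_1^{\mathbb{Z}}$), then every $f$ with $f(P_0)=0$ kills all of $\mu_2$, lies in the right kernel, and pairs trivially with \emph{every} element of $T(X)$ --- no choice of the auxiliary point $P$ can rescue this. The paper's key move is to choose generators $E_1[2]=\langle P_1,P_2\rangle$ with $P_1\in\mu_2$ and $P_2$ a complement, and to build $S$ by translating by $P_2$; your condition is instead placed on the Kummer class of the moving point $P$ relative to $\langle P_0\rangle$, which is not the relevant constraint and does not guarantee nonvanishing.

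A secondary difference: you propose to exhibit the nonvanishing by an explicit cup-product/Hilbert-symbol computation for a specific pair $(P,Q)$, which you have not carried out and which is the delicate part (one must track the Weil pairing on $E_2[2]$, which pairs $\mu_2$ with $E_2[2]/\mu_2$ and is trivial on $\mu_2\times\mu_2$). The paper avoids this entirely by quoting the determination of the left kernel (\cite[Theorem 1.2]{Yam05}) and of the right kernel of the pairing, which yields the \emph{existence} of a class $z\in T(X)/2$ pairing nontrivially with the chosen descended $f$ without writing $z$ down. Finally, you would still need to exhibit actual curves satisfying all hypotheses (non-isogenous over $\overline{k}$, split multiplicative reduction, full rational $2$-torsion after a suitable base change), as the paper does with the curves $33.a2$ and $198.a2$ at $p=11$.
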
 

\begin{proof} 
Our strategy will be to find a pair of elliptic curves $E_1, E_2$ over $\mathbb{Q}$, a prime $p$ and a bielliptic surface $S=X/G$ of Type 1 (where as usual $X=E_1\times E_2$) such that there exists a zero-cycle $z\in T(X_{\mathbb{Q}_p})$ with the property $\pi_\star(z)\neq 0$. We start with some local analysis over an arbitrary $p$-adic field $k$ with $p\geq 5$, but we implicitly assume that $X$ is defined over a number field, so that \autoref{Brcomputation} can be used. 

Consider a product $X=E_1\times E_2$ of elliptic curves over $k$. Suppose that both $E_1, E_2$ have split multiplicative reduction. Then, as rigid $p$-analytic spaces, $E_i\simeq \mathbb{G}_m/q_i^\mathbb{Z}$ is a Tate elliptic curve for $i=1,2$, where $q_i\in k^\times$. This $p$-adic uniformization implies that for every $n\geq 1$ there is a short exact sequence of $\Gal(\overline{k}/k)$-modules
\[0\to\mu_{n}\to E_{i}[n]\to \mathbb{Z}/n\mathbb{Z}\to 0.\] In fact, if $\zeta_n\in\overline{k}^\times$ is a primitive $n$-th root of unity, then 
\[E_i[n]=\{\zeta_n^a\sqrt[n]{q^b}: 0\leq a, b\leq n-1\}.\]

From now on we assume additionally that $E_1, E_2$ are non-isogenous over $\overline{k}$. 
Consider the Brauer-Manin pairing \[\langle,\rangle_X:\CH_0(X)\times\Br(X)\to\mathbb{Q}/\mathbb{Z}.\] 
Because $E_1, E_2$ both have split multiplicative reduction, it follows by \cite[Theorem 1.2]{Yam05} that the left kernel of the pairing is the maximal divisible subgroup of $\CH_0(X)$. As mentioned in \autoref{BMsquare}, $\langle,\rangle_X$ restricts to a pairing  
\[\langle,\rangle_X:T(X)\times \frac{\Br(X)}{\Br_1(X)}\to \mathbb{Q}/\mathbb{Z}.\] 
Restricting further to the $n$-torsion subgroup of the Brauer group, we obtain

\[\langle,\rangle_X:\frac{T(X)}{n}\times \frac{\Br(X)[n]}{\Br_1(X)[n]}\to \mathbb{Z}/n\mathbb{Z}.\] 
Let $H_1$ be the right kernel of this pairing and $Q$ be the quotient of $\displaystyle\frac{\Br(X)[n]}{\Br_1(X)[n]}$ by $H_1$. 
The nonzero elements of $Q$ are those transcendental Brauer classes that pair nontrivially with elements of $T(X)/n$.  Recall from \cite[Proposition 3.3]{SZ12} that we have an isomorphism 
\[\Br(X)[n]/\Br_1(X)[n]\simeq\Hom_{\Gal(\overline{k}/k)}(E_1[n],E_2[n]).\]
Thus, to determine the quotient $Q$ we need to understand which $\Gal(\overline{k}/k)$-equivariant homomorphisms $f:E_1[n]\to E_2[n]$  pair nontrivially with $T(X)/n$. This is computed in more generality in \cite[Theorem 1.6]{Gaz19}, but in the special case when both elliptic curves have split multiplicative reduction it also follows from \cite[Proof of Theorem 4.3]{Yam05}. If we assume for simplicity that $E_i[n]\subset E_i(k)$ for $i=1,2$, the answer is precisely $\Hom(\mu_n, \mathbb{Z}/n\mathbb{Z})$.  Notice that $\Hom(\mu_n, \mathbb{Z}/n\mathbb{Z})$ is considered a quotient of $\Hom(E_1[n], E_2[n])$ via the compositions: 
\[\mu_n\hookrightarrow E_1[n]\xrightarrow{\phi} E_2[n]\twoheadrightarrow \mathbb{Z}/n\mathbb{Z}, \] where the injection  $\mu_n\hookrightarrow E_1[n]$ and the surjection $E_2[n]\twoheadrightarrow \mathbb{Z}/n\mathbb{Z}$ are the ones induced by the $p$-adic uniformizations. 

Next we focus on the case $n=2$. Assume that $E_i[2]\subset E_i(k)$ for $i=1,2$. We can then choose generators of $E_1[2]=\langle P_1, P_2\rangle$ such that $P_1\in\mu_2$ and $P_2\in E_1[2]/\mu_2$. Consider the bielliptic surface $S=X/G$, with $G=\mathbb{Z}/2\mathbb{Z}=\langle\sigma\rangle$ acting on $E_1$ by translation by $P_2$. It follows by \autoref{Brcomputation} and its proof that the subgroup $H_2=\pi^\star(\Br(S)/\Br_1(S))$ of $\Hom(E_1[2],E_2[2])$ is precisely the one consisting of those homomorphisms $g:E_1[2]\to E_2[2]$ that send $P_2$ to $0$. We see that under the choices we made there exists a homomorphism $f\in H_2$ such that its image in the quotient $Q=\Hom(\mu_n, \mathbb{Z}/n\mathbb{Z})$ is nonzero. This means that there exists $\alpha\in \pi^\star(\Br(S)/\Br_1(S))$ and $z\in T(X)/2$ such that 
\[\langle \pi_\star(z), \alpha\rangle_S=\langle z, \pi^\star(\alpha)\rangle_X\neq 0.\] (Note that the first equality follows by \cite[Lemma 3.7]{GL24}). We conclude that $\pi_\star(z)\in T(S)$ is nontrivial. 

To close the argument, we need to verify that such elliptic curves exist, and we can even choose them to be defined over $\mathbb{Q}$, so that we can apply \autoref{Brcomputation}. These are easy to find. For example consider the elliptic curves 
\[E_1: y^2=x^3-14931x+220590,\;\;\;\;\;E_2:y^2=x^3-3171x+68510\] with LMFDB labels $33.a2$ and $198.a2$ respectively. Both curves have split multiplicative reduction at $p=11$. At the same time, $E_2$ has non-split multiplicative reduction at $p=2$, while $E_1$ has good reduction at $2$, and hence $E_1, E_2$ are not isogenous over $\overline{k}$.

$\hfill\Box$
\end{proof}

 We note that in the above proof the choice to consider elliptic curves with bad reduction was necessary. If both elliptic curves have instead good reduction over $k$, then given that $p>2$, it follows by \cite[Corollary 3.5.1]{RS00} that the group $T(X)$ is $2$-divisible. Thus, there is no $z\in T(X)$ such that $\pi_\star(z)\neq 0$. This leads to the following Corollary, which improves \autoref{mainthm} in this case.
 
 \begin{cor} Let $X=E_1\times E_2$ be a product of elliptic curves over a $p$-adic field $k$. Assume that $p\geq 5$ and both elliptic curves have good reduction. Let $S=(E_1\times E_2)/G$ be a bielliptic surface of Type 1 (resp. of Type 5) over $k$. Then the Albanese kernel $T(S)$ is $2$-torsion (resp. $3$-torsion). 
 \end{cor}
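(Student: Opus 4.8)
The plan is to combine the structural reduction from the proof of \autoref{mainthm} with the extra input coming from the $2$-divisibility of $T(X)$ when $X=E_1\times E_2$ has good reduction. Recall from that proof that the push-forward $\pi_\star:T(X)\to T(S)$ has cokernel killed by $\varepsilon(S)$, where $\varepsilon(S)=2$ for a Type $1$ surface and $\varepsilon(S)=3$ for a Type $5$ surface. Thus, to bound the exponent of $T(S)$ it suffices to control $\pi_\star(T(X))$ together with this cokernel. The improvement over \autoref{mainthm} is that under the good reduction hypothesis the subgroup $\pi_\star(T(X))$ contributes nothing at all.

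First I would invoke \cite[Corollary 3.5.1]{RS00}: for a product of elliptic curves with good reduction over a $p$-adic field with $p\geq 5$, the Albanese kernel $T(X)$ is divisible by every prime $\ell\neq p$; in particular it is $2$-divisible and $3$-divisible. Since $T(S)$ is a torsion group (by \autoref{mainthm}, or already from \cite{BKL76}), the image $\pi_\star(T(X))$ is a torsion quotient of a $\varepsilon(S)$-divisible group, hence is itself $\varepsilon(S)$-divisible. But \autoref{mainthm} also tells us $\pi_\star(T(X))$ has finite exponent (it is $\varepsilon(S)^2$-torsion in the Type $1$/Type $5$ case), and a group that is simultaneously $\varepsilon(S)$-divisible and of finite exponent a power of $\varepsilon(S)$ must be trivial: if $\varepsilon(S)^j$ kills it and it is $\varepsilon(S)$-divisible, then every element is $\varepsilon(S)^j$ times something, hence zero. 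Therefore $\pi_\star(T(X))=0$.

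Next, with $\pi_\star(T(X))=0$ the short exact sequence $0\to\pi_\star(T(X))\to T(S)\to T(S)/\pi_\star(T(X))\to 0$ collapses to an isomorphism $T(S)\simeq T(S)/\pi_\star(T(X))$, and the latter is annihilated by $\varepsilon(S)$ by the identity $\pi_\star\pi^\star=\deg(\pi)=\varepsilon(S)$ exactly as in the proof of \autoref{mainthm} (cf.~\cite[Theorem 5.2]{GR25}). Hence $T(S)$ is $2$-torsion in the Type $1$ case and $3$-torsion in the Type $5$ case, which is the claim. Note that for Type $5$ one should apply \cite[Corollary 3.5.1]{RS00} with $\ell=3$, which again requires only $p\geq 5$ (indeed $p\neq 3$ suffices, and $p\neq 2,3$ is already standing throughout).

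The only genuine subtlety — and the step I would be most careful about — is making sure the relevant divisibility statement from \cite{RS00} is being applied in the exact form needed: one wants $T(X)$ to be $\ell$-divisible for $\ell\in\{2,3\}$ over a $p$-adic field with good reduction, which is precisely what \cite[Corollary 3.5.1]{RS00} gives for $\ell\neq p$. Everything else is formal manipulation of the push-forward/pullback identities already established in the proof of \autoref{mainthm}, so no new calculation is required; the Corollary is really just a packaging of \autoref{mainthm} with one additional citation.
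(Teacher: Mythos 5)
Your argument is correct and is essentially the paper's own (the paper leaves the proof implicit in the remark preceding the Corollary): invoke \cite[Corollary 3.5.1]{RS00} to get that $T(X)$ is $\varepsilon(S)$-divisible, conclude that $\pi_\star(T(X))$ vanishes because it is both divisible and of finite exponent a power of $\varepsilon(S)$, and then use $\pi_\star\pi^\star=\deg(\pi)=\varepsilon(S)$ to see that $T(S)=T(S)/\pi_\star(T(X))$ is killed by $\varepsilon(S)$. Your writeup in fact makes explicit a step the paper glosses over, namely why divisibility of $T(X)$ forces $\pi_\star(T(X))=0$.
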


\begin{remark}\label{finitefield}  It is easy to see that if $E_1, E_2$ have good reduction, then so does the bielliptic surface $S$. Let $\overline{S}$ be the reduction surface over the residue field $\mathbb{F}$. It follows by the unramified class field theory of Kato and Saito (\cite[Proposition 9]{KS83}) that the group $A_0(\overline{S})$ fits into a short exact sequence
\[0\to \NS(\overline{S})_{\tor}\to A_0(\overline{S})\to \Alb_{\overline{S}}(\mathbb{F})\to 0,\] where $\NS(\overline{S})$ is the N\'{e}ron-Severi group of $\overline{S}$. By passing to a finite unramified extension if necessary, we may assume that $\NS(\overline{S})_{\tor}$ coincides with the torsion subgroup of the geometric N\'{e}ron-Severi group $\NS(\overline{S}_{\overline{\mathbb{F}}})$, which is nontrivial, since there is a class corresponding to the abelian \'{e}tale cover $\overline{X}\xrightarrow{\overline{\pi}}\overline{S}$. We conclude that the Albanese kernel of the reduction $T(\overline{S})$ is nontrivial. Given that the specialization map induces a surjective group homomorphism 
\[A_0(S)\to A_0(\overline{S})\] (cf.~\cite[Th\'{e}or\`{e}me 0.1]{CT11}), it is likely that even in the good reduction case $T(S)\neq 0$, but this is not something we can verify in the current article. 
\end{remark}

\end{document}